\theoremstyle{definition}
\newtheorem{theorem}{Theorem}[section]
\newtheorem{lemma}[theorem]{Lemma}
\newtheorem{proposition}[theorem]{Proposition}
\newtheorem{definition}[theorem]{Definition}
\newtheorem{remark}[theorem]{Remark}
\numberwithin{equation}{section}
\numberwithin{theorem}{section}
\newcommand{\del}{\ensuremath{\mathrm{\partial}}}
\def\C{\mathbb C}
\def\F{\mathcal{F}}
\def\G{\mathbb{G}}
\def\O{\mathcal{O}}
\def\P{\mathbb{P}}
\def\R{{\mathbb R}}
\def\S{\mathscr{S}}
\def\T{\mathscr{T}}
\def\la{\langle}
\def\ra{\rangle}
\def\conv{{\rm{Conv}}}
\def\sec{\displaystyle \Sigma{\rm{sec}}}
\begin{document}

\title{Facets of secondary polytopes and Chow stability of toric varieties}

\author{Naoto \textsc{Yotsutani}}
\address{School of Mathematical Sciences at Fudan University,
Shanghai, 200433, P. R. China}
\email{naoto-yotsutani@fudan.edu.cn}

\subjclass[2010]{Primary 51M20; Secondary 53C55}

\keywords{Secondary polytope, Geometric Invariant Theory, Toric variety}

\begin{abstract}
Chow stability is one notion of
Mumford's Geometric Invariant Theory for studying the moduli space of
polarized varieties. Kapranov, Sturmfels and Zelevinsky detected
that Chow stability of polarized toric varieties is determined by
its inherent {\it secondary polytope}, which is a polytope whose
vertices correspond to regular triangulations of the
associated polytope \cite{KSZ}. In this paper, we give a
purely convex-geometrical proof that the Chow form of a
projective toric variety is $H$-semistable if and only if it is
$H$-polystable with respect to the standard complex torus action $H$.
This \emph{essentially} means that Chow semistability is equivalent to Chow polystability
for any (not-necessaliry-smooth) projective toric varieties.
\end{abstract}

\maketitle

\section{Introduction}
Let $X^n\longrightarrow \P^N$ be an $n$-dimensional complex projective variety with $\deg X\geqslant 2$ 
embedded by very ample complete linear system. 
Chow stability is one notion of Geometric Invariant Theory (GIT) investigated
by many researchers. In the present paper, we study Chow poly(semi)stability of a projective
toric variety for the standard complex torus action. To state our result more
precisely, let us briefly recall the fundamental knowledge on toric varieties.
See \cite{CLS,GKZ,Oda} for more details. 
Let $A=\set{{\bf{a}}_0,\dots, {\bf{a}}_N} \subset \mathbb{Z}^n$ be a finite set of integer vectors. Let $Q$ denote the convex hull of $A$ in $\mathbb{R}^n$. A finite set $A$ is said to {\emph {satisfy $(*)$}}  if the following conditions hold:
\begin{enumerate}
 \item[i)] $A=Q\cap \mathbb{Z}^n=\set{{\bf a}_0, \dots, {\bf a}_N}$. 
 \item[ii)] $A$ affinely generates the lattice $\mathbb{Z}^n$ over $\mathbb{Z}$. 
 \end{enumerate}
Now we regard $A$ as a set of Laurent monomials in $n$ variables, i.e., of monomials of the form
\[
{\bf x}^{\bf a}=x_1^{a_1}\cdots x_n^{a_n},
\] 
where ${\bf a}=(a_1, \dots, a_n)\in A$ is the exponent vectors and $x_1, \dots, x_n$ are $n$-variables.
The closure of the $A$-monomial embedding of a complex
torus $(\mathbb{C}^{\times})^n$ to the projective space defines the $n$-dimensional projective
toric variety $X_A$. It is well-known that toric Fano varieties with the anticanonical polarization correspond to
reflexive polytopes. Recall that a fully
dimensional integral polytope $Q$ containing the origin in its interior is called
 \emph{reflexive} if vertices are primitive lattice points and whose
polar dual polytope is again an integral polytope.

Next we quick review some related results on Chow stability of polarized
varieties which will be the source of our argument. One of the reasons why
Chow stability is important in K\"ahler geometry is that Chow stability is closely
related to the existence problem of canonical Riemannian metrics on a certain compact K\"ahler
manifold. A breakthrough result has been achieved by Donaldson in
\cite{Donaldson01}. Let $(X,L)$ be a smooth polarized variety, that is, $X$ is an $n$-dimensional smooth complex variety and
$L$ is a very ample line bundle over $X$. Donaldson showed that the existence of a constant scalar curvature
K\"ahler (cscK) metric representing the first Chern class $c_1(L)$ implies
asymptotic Chow stability of a polarized variety $(X,L)$ whenever $X$ has
no holomorphic vector fields. This result has been extended by Mabuchi in the case
where the automorphism group is not discrete. In \cite{Mabuchi}, Mabuchi proved
that if $(X,L)$ admits a cscK metric in $c_1(L)$ then $(X,L)$ is asymptotically Chow
polystable whenever $(X,L)$ satisfies the hypothesis of the obstruction for
asymptotic Chow semistability. Eventually, Futaki has detected that Mabuchi's
hypothesis is equivalent to the vanishing of a collection of integral invariants
$\F_{{\rm{Td}}^1},\dots,\F_{{\rm{Td}}^n}$ defined in \cite{Futaki}, where ${\rm{Td}}^i$
denotes the $i$-th Todd polynomial. The reader should bear in mind that $\F_{{\rm{Td}}^1}$ equals the
\emph{classical} Futaki invariant up to a multiplicative constant, so that $\F_{{\rm{Td}}^1}$ is an obstruction for the existence of
cscK metrics in $c_1(L)$. Since these integral
invariants $\F_{{\rm{Td}}^i}$ are a generalization of the classical Futaki invariant, we call them
\emph{higher} Futaki invariants. Combining Mabuchi's result \cite{Mabuchi} and Futaki's statement \cite{Futaki},
we have the following.
\begin{theorem}[Mabuchi-Futaki \cite{Mabuchi,Futaki}]
Let $(X,L)$ be an $n$-dimensional smooth polarized variety. Assume
that the higher Futaki invariants $\F_{{\rm{Td}}^i}$ vanishes for
each $i=1,\dots,n$. Then if $(X,L)$ admits a cscK metric in $c_1(L)$ then
$(X,L)$ is asymptotically Chow polystable.
\end{theorem}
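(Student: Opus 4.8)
The plan is to assemble the two cited results into the single statement, since the theorem is precisely the synthesis of Mabuchi's polystability criterion \cite{Mabuchi} with Futaki's identification of the relevant obstruction \cite{Futaki}. First I would recall the precise form of Mabuchi's theorem: if $(X,L)$ carries a cscK metric in $c_1(L)$ and, in addition, $(X,L)$ satisfies the hypothesis under which the obstruction to asymptotic Chow semistability vanishes, then $(X,L)$ is asymptotically Chow polystable. This is the substantive analytic input, and I would quote it verbatim rather than attempt to reprove it, as its proof rests on Mabuchi's extension of Donaldson's techniques to the non-discrete automorphism setting.

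The second step is to invoke Futaki's characterization \cite{Futaki}, which translates Mabuchi's somewhat implicit hypothesis into the concrete condition $\F_{{\rm{Td}}^i}=0$ for every $i=1,\dots,n$. Here the Todd-type integral invariants $\F_{{\rm{Td}}^i}$ are defined purely in terms of the polarized variety, and Futaki shows that their simultaneous vanishing is equivalent to Mabuchi's hypothesis. I would state this equivalence explicitly and, if self-containedness is desired, recall the definition of the $\F_{{\rm{Td}}^i}$ so that the hypothesis of the theorem is unambiguous.

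With both results in hand the argument is immediate: the standing assumption that $\F_{{\rm{Td}}^i}$ vanishes for $i=1,\dots,n$ is, by Futaki's equivalence, exactly Mabuchi's hypothesis. Hence Mabuchi's theorem applies, and the existence of a cscK metric in $c_1(L)$ yields asymptotic Chow polystability of $(X,L)$, which is the desired conclusion.

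I expect the main obstacle to be expository rather than mathematical: one must phrase Mabuchi's hypothesis and Futaki's invariants in compatible normalizations, so that the equivalence is literally an identification of the same condition with no hidden constant or sign discrepancy. Since $\F_{{\rm{Td}}^1}$ agrees with the classical Futaki invariant only up to a multiplicative constant, some care is needed to ensure the vanishing statements line up exactly; beyond this bookkeeping the proof is a direct concatenation of the two cited theorems.
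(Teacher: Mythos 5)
Your proposal matches the paper's own treatment exactly: the paper offers no independent proof but simply combines Mabuchi's polystability theorem from \cite{Mabuchi} with Futaki's identification in \cite{Futaki} of Mabuchi's hypothesis with the vanishing of the invariants $\F_{{\rm{Td}}^1},\dots,\F_{{\rm{Td}}^n}$, which is precisely your concatenation. Your closing caveat about normalizations is sensible but does not change the argument, so the proposal is correct and takes essentially the same route.
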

One of the best possible result on the canonical Riemannian metrics of smooth toric Fano varieties,
due to X. J. Wang and X. Zhu, is the following.
\begin{theorem}[Wang-Zhu \cite{WZ}]
Let $X$ be a smooth toric Fano variety. Then
$(X,\O_X(K_X^{-1}))$ admits a K\"ahler-Einstein metric in $c_1(\O_X(K_X^{-1}))$ if and
only if the classical Futaki invariant vanishes.
\end{theorem}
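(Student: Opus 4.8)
The statement asserts both an obstruction (the ``only if'') and an existence (the ``if'') result, and my plan treats them separately. For the ``only if'' direction I would invoke the classical obstruction theory: a Kähler--Einstein metric in $c_1(\O_X(K_X^{-1}))$ forces the vanishing of the Futaki invariant, since the latter is a character on the Lie algebra of holomorphic vector fields that is independent of the chosen metric in the class and must vanish whenever the Ricci potential is constant. The substantive content is the converse, which I would attack by the continuity method applied to a reduced real Monge--Ampère equation, exploiting the torus symmetry throughout.

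First I would reduce the complex problem to a scalar one. On the open dense orbit $(\C^\times)^n\subset X$, introduce logarithmic coordinates $x_i=\log|z_i|^2$ and restrict to $T^n$-invariant Kähler metrics; such a metric in $c_1(\O_X(K_X^{-1}))$ is encoded by a smooth strictly convex function $u$ on $\R^n$ whose gradient image is the interior of the moment polytope $P$, with recession behaviour governed by the support function of $P$. A direct computation, using $\mathrm{Ric}=-\partial\bar\partial\log\det(D^2u)$, shows that the Kähler--Einstein condition is equivalent to the real Monge--Ampère equation $\det(D^2u)=C\,e^{-u}$ on $\R^n$. In these coordinates the classical Futaki invariant, viewed as a linear functional on the torus Lie algebra, is represented by the barycenter $\mathbf{b}_P=\tfrac{1}{\mathrm{vol}(P)}\int_P y\,dy$, so that its vanishing is exactly the condition $\mathbf{b}_P=0$.

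Next I would run the continuity method: deform the right-hand side along a path $\det(D^2u_t)=C_t\,e^{-tu_t}\psi_0^{1-t}$ and let $S\subset[0,1]$ be the set of parameters admitting a solution with the prescribed asymptotics. Openness of $S$ follows from the implicit function theorem, the linearized operator being a weighted Laplacian that is an isomorphism after restricting to the space $L^2$-orthogonal to the holomorphic vector fields. Closedness requires a priori estimates: a Yau-type maximum principle yields the second-order bound, and together with the Evans--Krylov and Schauder theory this reduces everything to a uniform $C^0$ estimate on the normalized potentials.

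The main obstacle is precisely this $C^0$/properness estimate, which is where the barycenter hypothesis is genuinely used. I would show that the associated energy (the toric Ding or Mabuchi functional) is proper on the space of normalized convex functions by a convex-geometric analysis: after normalizing $u$ so that $\min u=u(0)=0$, one estimates the linear term $\int_P\langle\mathbf{b}_P,y\rangle\,dy$ against the defining functional of $P$ and shows that $\mathbf{b}_P=0$ keeps the normalized potentials in a compact family, preventing escape of mass to the facets of $P$. Properness then gives the uniform bound, hence closedness of $S$ and a solution at $t=1$, producing the desired Kähler--Einstein metric; conversely, when $\mathbf{b}_P\neq0$ the functional fails to be proper and Wang--Zhu's Kähler--Ricci soliton with nonzero soliton field is the best attainable object, which confirms the stated equivalence.
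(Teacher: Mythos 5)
This theorem is quoted in the paper as a result of Wang and Zhu \cite{WZ}; the paper supplies no proof of it (it is background for motivating Theorem \ref{th:main}), so there is no internal argument to compare yours against. Judged on its own, your sketch is a faithful outline of the original Wang--Zhu strategy: the ``only if'' direction is the standard metric-independence of the Futaki character; the reduction of torus-invariant K\"ahler--Einstein metrics to the real Monge--Amp\`ere equation $\det(D^2u)=Ce^{-u}$ on $\R^n$ with gradient image the interior of the moment polytope is correct; the identification of the Futaki invariant (restricted to the torus, which suffices for a toric manifold) with the barycenter of the moment polytope is correct; and you correctly isolate the uniform $C^0$ estimate along the continuity path as the crux where the hypothesis $\mathbf{b}_P=0$ enters.

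The one place where the sketch papers over the genuinely hard step is the $C^0$ estimate itself. Saying ``normalize $u$ so that $\min u=u(0)=0$'' hides the issue: along the continuity path one can only normalize the \emph{value} of the minimum, and the whole difficulty is that the minimizing point $x_t$ of the normalized potential could escape to infinity, which is exactly the degeneration corresponding to a one-parameter subgroup destabilizing the variety. Wang and Zhu's key lemma is that $|x_t|$ stays uniformly bounded, and it is in that estimate --- an integral inequality pairing the defining functional against linear functions on $P$ --- that $\mathbf{b}_P=0$ is used; properness of the energy functional is a consequence, not the starting point. As written, your properness claim is asserted rather than derived, so the proposal is a correct road map but not yet a proof. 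The closing remark that non-properness for $\mathbf{b}_P\neq0$ ``confirms the equivalence'' is also unnecessary (and slightly circular), since the ``only if'' direction was already disposed of by the Futaki obstruction at the outset.
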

Note that all cscK metrics in $c_1(\O_X(K_X^{-1}))$ are K\"ahler-Einstein metrics on
smooth Fano varieties. Summing up these results, one can see that asymptotic
Chow semistability implies asymptotic Chow polystability for \textit{smooth} toric Fano varieties. 
Considering a direct combinatorial proof of this result,
we provide more general result. That is, for an equivalently embedded projective toric variety $X_A \subset \P^N$, 
Chow semistability is \textit{essentially} equivalent to Chow polystability. In the above, the reader should bear in mind
that we fixed a polarization and do \textit{not} need asymptotic (semi)stability in order to show our result. More precisely, we have the following.
\begin{theorem}\label{th:main}
Let $A=\set{{\bf a}_0, \dots, {\bf a}_N} \subset \mathbb{Z}^n$ be a finite set of integer vectors which satisfies $(*)$.
 Let $X_A \longrightarrow \P^N$ be the associated complex projective toric variety with $\deg X_A\geqslant 2$. Considering the algebraic torus action of $(\C^{\times})^{N+1}$ into $\P^N$, we define the subtorus of $(\C^{\times})^{N+1}$ by
\[
H=\set{(t_0,\dots,t_N)\in (\C^{\times})^{N+1}|\prod_{j=0}^Nt_j=1}.
\]
Then the Chow point of $X_A$ is $H$-semistable if and only if it is $H$-polystable.
\end{theorem}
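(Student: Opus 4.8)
The plan is to translate the GIT statement into a single assertion about the secondary polytope and then settle it by convex geometry. For a torus $H$ acting linearly on a projective space, a point is $H$-semistable exactly when $0$ lies in the convex hull of the $H$-weights occurring in a lift of the point, and $H$-polystable exactly when $0$ lies in the \emph{relative interior} of that weight polytope. By Kapranov--Sturmfels--Zelevinsky \cite{KSZ} the $(\C^{\times})^{N+1}$-weight polytope of the Chow form of $X_A$ is the secondary polytope $\Sigma(A)\subset\R^{N+1}$, which lies in the hyperplane $\set{x:\sum_i x_i=(n+1)\,\mathrm{vol}(Q)}$. Restricting the action to $H=\ker(\prod_j t_j)$ amounts to projecting the weights along the line $\R(1,\dots,1)$, and since that line is transverse to the above hyperplane, $0$ lies in the $H$-weight polytope iff the symmetric point
\[
\mathbf{c}=\frac{(n+1)\,\mathrm{vol}(Q)}{N+1}\,(1,\dots,1)
\]
lies in $\Sigma(A)$, and likewise for relative interiors. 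Thus the theorem is equivalent to the purely convex-geometric implication $\mathbf{c}\in\Sigma(A)\Rightarrow\mathbf{c}\in\mathrm{relint}\,\Sigma(A)$, the reverse being trivial.

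Next I would set up a face analysis. A functional is constant on $\Sigma(A)$ precisely when it is an affine height $\omega_i=\alpha+\la\beta,\mathbf{a}_i\ra$ (these induce the trivial subdivision of $Q$); so if $\mathbf{c}$ were \emph{not} in $\mathrm{relint}\,\Sigma(A)$ there would be a \emph{non-affine} height $\omega$ whose functional attains its minimum over $\Sigma(A)$ at $\mathbf{c}$. Using $(\phi_T)_i=\sum_{\sigma\ni\mathbf{a}_i}\mathrm{vol}(\sigma)$ one computes $\la\omega,\phi_T\ra=(n+1)\int_Q g_{T,\omega}\,dx$, where $g_{T,\omega}$ is the piecewise-linear interpolant of $\omega$ on the triangulation $T$; minimizing over $T$ selects the lower convex envelope $\hat g_\omega$ of the points $(\mathbf{a}_i,\omega_i)$. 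Dividing by $(n+1)\,\mathrm{vol}(Q)$, the condition that $\mathbf{c}$ lie on the face exposed by $\omega$ becomes the equality
\[
\bar\omega:=\frac1{N+1}\sum_{i=0}^N\omega_i=\frac1{\mathrm{vol}(Q)}\int_Q\hat g_\omega\,dx .
\]

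The heart of the argument is then the following splitting. The quantity $\bar\omega-\frac1{\mathrm{vol}(Q)}\int_Q\hat g_\omega\,dx$, which is $\ge 0$ by semistability, decomposes as $(I)+(II)$ with
\[
(I)=\frac1{N+1}\sum_i\bigl(\omega_i-\hat g_\omega(\mathbf{a}_i)\bigr),\qquad
(II)=\frac1{N+1}\sum_i\hat g_\omega(\mathbf{a}_i)-\frac1{\mathrm{vol}(Q)}\int_Q\hat g_\omega\,dx .
\]
Here $(I)\ge 0$ since $\hat g_\omega\le\omega$ on $A$, and $(II)\ge 0$ \emph{also} follows from semistability: applying the criterion to the modified height $\eta_i:=\hat g_\omega(\mathbf{a}_i)$, whose lower envelope is again $\hat g_\omega$, gives $\bar\eta\ge\frac1{\mathrm{vol}(Q)}\int_Q\hat g_\omega\,dx$, which is exactly $(II)\ge 0$. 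The face equality forces $(I)+(II)=0$, hence $(I)=(II)=0$. From $(I)=0$ every lattice point lies on the lower hull, so $\omega=\hat g_\omega$ is convex; from $(II)=0$ the discrete mean of the convex function $\hat g_\omega$ over $A$ equals its Lebesgue mean over $Q$. It then remains to show this equality forces $\hat g_\omega$ to be affine, contradicting the non-affinity of $\omega$ and finishing the proof.

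I expect this last step to be the main obstacle: the equality case of the comparison between the discrete mean over $A=Q\cap\mathbb{Z}^n$ and the Lebesgue mean over $Q$ for convex functions. This is a discrete Hermite--Hadamard phenomenon—equivalently, the statement that the uniform lattice measure dominates Lebesgue measure in the convex order, with equality only for affine functions—and it is exactly here that a precise description of the \emph{facets} of $\Sigma(A)$ enters: one analyzes the coarsest regular subdivision cut out by $\omega$ cell by cell, using that semistability restricts to each sub-configuration, to exclude any nontrivial wall. The balancing identity $\mathrm{bary}(A)=\mathrm{bary}(Q)$, itself forced by testing semistability against affine heights, serves as the base case of this analysis.
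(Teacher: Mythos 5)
Your reduction of the theorem to the convex-geometric implication ``$\mathbf{c}\in\sec(A)\Rightarrow\mathbf{c}\in\mathrm{relint}\,\sec(A)$'' and your identification of the functionals constant on $\sec(A)$ with affine heights match the paper's setup exactly (Proposition \ref{prop:numerical criterion2}, Theorem \ref{th:ksz}, and the normalization (\ref{eq:Ono})). From there your route genuinely diverges from the paper's: the splitting of the deficit into $(I)+(II)$, and especially the observation that $(II)\geqslant 0$ follows by feeding the modified height $\eta_i=\hat g_\omega(\mathbf{a}_i)$ (which has the same lower envelope as $\omega$) back into the semistability inequality, is a clean device the paper does not use. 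It correctly forces every $\mathbf{a}_i$ onto the lower hull and reduces the whole theorem to the equality case of a discrete Hermite--Hadamard statement for the convex piecewise-linear function $\hat g_\omega$.

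That equality case, however, is where the entire content of the theorem sits, and your proposal does not prove it. The sketch offered --- analyze the coarsest subdivision cell by cell ``using that semistability restricts to each sub-configuration'' --- rests on an unjustified and, as stated, dubious claim: the sub-configuration $A\cap C$ of a cell $C$ need not satisfy $(*)$ and need not be semistable, and the lattice points lying on interior walls are shared between cells, so neither the discrete mean nor the semistability hypothesis localizes. The barycenter identity you propose as a base case only expresses that $\mathbf{c}$ lies in the affine hull of $\sec(A)$ and carries no information about walls. The paper closes precisely this gap by a different mechanism: it takes the facet of $\sec(A)$ through $(t,\dots,t)$, uses the explicit facet equation of Lemma \ref{lem:gkz2} to normalize the supporting height to a $0/1$ vector supported on a set $I$, computes $t$ as the ratio of the number of vertices of $Q$ lying in $I$ to $\mathrm{Card}(I)$ (so $0<t\leqslant 1$), and then confronts $N+1=(n+1)!\,\mathrm{Vol}(Q)/t\geqslant (n+1)!\,\mathrm{Vol}(Q)$ with the Ehrhart-theoretic inequality $\mathrm{Card}(Q\cap\mathbb{Z}^n)\leqslant (n+1)!\,\mathrm{Vol}(Q)$, whose equality case is the standard simplex (Lemma \ref{lem:polytope}, via nonnegativity of the Ehrhart $h$-vector). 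Some quantitative lattice-points-versus-volume input of this kind is exactly what your final step is missing; until the equality case of your discrete Hermite--Hadamard inequality is actually established, the argument is incomplete at its crux.
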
 
Remark that Theorem $\ref{th:main}$ does \textit{not} require $X_A$ to be either smooth or Fano variety.
Also we note that $H$-polystability implies $H$-semistability by its definition (see Definition $\ref{def:stability}$).
On the other hand, even if $X$ is Fano variety Theorem $\ref{th:main}$ does not seem to be true in non-toric case. In fact, a cubic surface $X\subset \P^3$
with a singular point of type $A_2$ gives an example of Fano variety which is Chow semistable but {\it {not}} Chow polystable 
(see Remark $\ref{rem:cubic}$
for more details).

The main idea of our proof is based on the following observation. Let $G$ be a reductive
algebraic group. Suppose $G$ acts linearly on a finite dimensional complex vector space $V$.
The well-known Hilbert-Mumford numerical criterion of GIT (Proposition $\ref{prop:numerical criterion}$)
gives a necessary and sufficient condition for a nonzero vector $v^{\ast}\in V$ being polystable (resp. semistable).
 In the special case when the reductive group $G$ is isomorphic to the algebraic
 torus, this criterion can be restate in terms of the corresponding weight polytope (Proposition $\ref{prop:numerical criterion2}$).
See \cite{Dolga,GKZ,Gabor} for more details. Roughly speaking, the condition
for $H$-semistability in Theorem $\ref{th:main}$ is equivalent to the fact that the corresponding weight polytope $\mathcal{N}_H(X)$
with respect to $H$-action containing the origin. On the other hand, the
condition for $H$-polystability is equivalent to the fact that $\mathcal{N}_H(X)$ containing the origin in its interior. In particular, the weight polytope of the Chow point (form) of $X\hookrightarrow \P^N$ with respect to $(\C^{\times})^{N+1}$-action is called the {\it Chow polytope}. In the toric case, we can describe Chow polytopes in purely combinatorial way. Namely, the Chow polytope of a toric variety $X_A$ coincides with the {\it secondary polytope} $\sec (A)$, which is a polytope whose vertices are corresponding to regular triangulations of $Q$ 
(see Theorem $\ref{th:ksz}$).
We will use this combinatorial approach via secondary polytopes in order to show our main theorem.

This paper is organized as follows. Section $\ref{sec:GIT}$ is a brief review on Geometric
Invariant Theory and Chow stability. In Section $\ref{sec:GKZ}$, we first define the
secondary polytope and discuss about its fundamental property due to the work of
Gelfand, Kapranov and Zelevinsky. The structure of secondary polytopes is well-discussed
in \cite{GKZ,Triangulation}. Section $\ref{sec:facets}$ collects a combinatorial description on secondary polytopes. 
We give the proof of the main theorem in Section $\ref{sec:main}$. \\

\noindent{\it Acknowledgements.} This work was partially
developed while the author visited at L' Institut Henri Poincare in the Fall of 2012, to attend the special trimester in Conformal and K\"ahler Geometry. The author would like to thank the organizers for their financial support and kind hospitality during his visit. The author thanks Xiaohua Zhu, Hajime Ono and Yalong Shi for helpful discussions. In particular, Ono carefully read this article and gave me many helpful advices. The example of cubic surfaces in $\P^3$ (Remark $\ref{rem:cubic}$) is kindly taught me by Yalong Shi.
I am also grateful to the referee for valuable comments. Last but not least, I wish to thank Zhi Lu for his warm encouragement. 
The author is partially supported by the China Postdoctoral Science Foundation Grant, No. 2011M501045 and CAS Fellow for Young International Scientist, 2011Y1JB05.

\section{Preliminaries}\label{sec:GIT}
\subsection{Weight polytope}
Let $G$ be a reductive algebraic group and $V$ be a finite
dimensional complex vector space. Suppose $G$ acts linearly on $V$.
Let us denote a point $v^{\ast}$ in $V$ which is a representative of
$v\in \P(V)$.
\begin{definition}\label{def:stability}\rm
Let $v^{\ast}$ be as above and let
$\mathcal{O}_G(v^{\ast})$ be the $G$-orbit in $V$.
\begin{itemize}
\item[(a)] $v^{\ast}$ is called \emph{$G$-semistable} if the Zariski
closure of $\O_G(v^{\ast})$ does not contain the origin: $0\notin
\overline{\O_G(v^{\ast})}$.
\item[(b)] $v^{\ast}$ is called \emph{$G$-polystable} if
$\mathcal{O}_G(v^{\ast})$ is closed orbit.
\end{itemize}
Analogously, $v\in \P(V)$ is said to be $G$-polystable (resp.
semistable) if any representative of $v$ is $G$-polystable (resp. semistable).
\end{definition}
\begin{remark}
The closure of $\O_G(v^{\ast})$ in the Euclidean topology coincides with the
Zariski closure $\overline{\O_G(v^{\ast})}$ (see, \cite{Mum76}, Theorem $2.33$).
\end{remark}

From Definition $\ref{def:stability}$, one can see that $G$-polystability
implies $G$-semistability as $G$-orbit itself never contain the
origin. The following Hilbert-Mumford criterion is well-known in
Geometric Invariant Theory.

\begin{proposition}[The Hilbert-Mumford criterion \cite{MFK94}]\label{prop:numerical criterion}
$v\in \P(V)$ is $G$-polystable (resp. semistable) if and only if $v$
is $H$-polystable (resp. semistable) for all maximal algebraic torus
$H\leq G$.
\end{proposition}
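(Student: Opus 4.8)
The plan is to reduce every orbit-closure question to one-parameter subgroups (1-PS) and then exploit the fact that each 1-PS of $G$ is contained in some maximal torus. The one nontrivial input I would invoke is the fundamental lemma underlying Hilbert--Mumford theory \cite{MFK94}: for a reductive $G$ acting linearly on $V$ and $v^{\ast}\in V$, one has $0\in\overline{\O_G(v^{\ast})}$ if and only if there is a 1-PS $\lambda$ of $G$ with $\lim_{t\to 0}\lambda(t)\cdot v^{\ast}=0$; moreover $\O_G(v^{\ast})$ fails to be closed precisely when there is a 1-PS $\lambda$ for which $w:=\lim_{t\to 0}\lambda(t)\cdot v^{\ast}$ exists but $w\notin\O_G(v^{\ast})$ (Kempf). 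Both statements remain valid verbatim with $G$ replaced by any maximal torus $H\le G$, since a torus contains all of its own 1-PS.

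For semistability the argument is short and symmetric. If $v$ is $G$-semistable, then $\O_H(v^{\ast})\subseteq\O_G(v^{\ast})$ forces $0\notin\overline{\O_H(v^{\ast})}$, so $v$ is $H$-semistable for every subgroup $H$, in particular for every maximal torus. Conversely, if $v$ is not $G$-semistable, the fundamental lemma yields a 1-PS $\lambda$ driving $v^{\ast}$ to $0$; choosing a maximal torus $H$ with $\lambda\in H$, the same limit gives $0\in\overline{\O_H(v^{\ast})}$, so $v$ is not $H$-semistable. This settles the semistable equivalence.

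For polystability I would first dispatch the implication ``$H$-polystable for all maximal tori $\Rightarrow$ $G$-polystable.'' Suppose $\O_G(v^{\ast})$ were not closed. By Kempf's criterion there is a 1-PS $\lambda$ of $G$ with $w=\lim_{t\to 0}\lambda(t)\cdot v^{\ast}$ existing and $w\notin\O_G(v^{\ast})$. Picking a maximal torus $H\ni\lambda$, we get $w\in\overline{\O_H(v^{\ast})}$, and $H$-polystability makes $\O_H(v^{\ast})$ closed, whence $w\in\O_H(v^{\ast})\subseteq\O_G(v^{\ast})$, a contradiction.

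The main obstacle is the reverse implication, ``$G$-polystable $\Rightarrow$ $H$-polystable for every maximal torus,'' and here the naive restriction argument genuinely breaks down: an $H$-orbit sitting inside a closed $G$-orbit need not itself be closed. For instance, with $G=\mathrm{SL}_2(\C)$ acting on binary quadratic forms, the form $xy+c\,y^{2}$ with $c\neq 0$ lies in the closed orbit of $xy$, yet its orbit under the diagonal torus is not closed. Thus for nonabelian $G$ this direction is delicate and would require the Kempf--Ness theorem or a moment-map comparison rather than a purely combinatorial 1-PS argument. I would therefore localize the difficulty and note that it disappears in the setting actually used here: the ambient group is itself an algebraic torus, hence its own unique maximal torus, so every implication above is either immediate or reduces to the semistable statement, which is the only substantive ingredient for the toric applications that follow.
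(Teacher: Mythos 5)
The paper offers no proof of this proposition---it is quoted verbatim from \cite{MFK94}---so there is no in-text argument to compare yours against; your proposal has to be judged on its own. The parts you do prove are correct: the semistable equivalence follows exactly as you say from the fundamental Hilbert--Mumford lemma together with the inclusion $\overline{\O_H(v^{\ast})}\subseteq\overline{\O_G(v^{\ast})}$, and the implication ``$H$-polystable for every maximal torus $\Rightarrow$ $G$-polystable'' is a correct application of the Kempf--Birkes refinement (a non-closed orbit admits a one-parameter subgroup whose limit exists and leaves the orbit, and any nontrivial one-parameter subgroup lies in some maximal torus).

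The substantive point is your final paragraph, and your diagnosis is right: the remaining implication is not merely resistant to the orbit-restriction argument, it is false as stated, so no proof of the polystable half of the proposition can exist for nonabelian $G$. Your counterexample checks out: $f=xy+cy^{2}$ with $c\neq 0$ is obtained from $xy$ by the unipotent substitution $x\mapsto x+cy$ in $\mathrm{SL}_2(\C)$, hence lies in the closed orbit of $xy$ (the fibre of the discriminant over $1$) and is $G$-polystable; but its weight polytope for the diagonal torus is the segment spanned by the weights of $xy$ and $y^{2}$, which has the origin as a \emph{vertex} rather than an interior point, so by Proposition \ref{prop:numerical criterion2} it is not polystable for that maximal torus. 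What \cite{MFK94} actually supplies is the maximal-torus criterion for stability and semistability; a correct torus-theoretic characterization of polystability must be phrased differently (e.g.\ every one-parameter subgroup limit that exists must already lie in the $G$-orbit of $v^{\ast}$, not in the torus orbit). Your closing observation is also the right way to localize the damage: Theorem \ref{th:main} concerns only the fixed torus $H\leq(\C^{\times})^{N+1}$ and uses only Proposition \ref{prop:numerical criterion2}, where the ambient group is abelian and the issue disappears; the proposition above enters only in the informal passage from $H$-stability to genuine Chow stability (the ``essentially'' of the abstract), and that is precisely where the false direction would be needed.
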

Now we assume that a reductive group $G$ is isomorphic to an algebraic torus.
Let $\chi(G)$ denote the character group of $G$. Then $\chi(G)$ consists of algebraic homomorphisms
$\chi: G\longrightarrow \C^{\times}$. If we fix an isomorphism $G\cong (\C^{\times})^{N+1}$, we may
express each $\chi$ as a Laurent monomial
\[
\chi(t_0,\dots, t_N)=t_0^{a_0}\cdots t_N^{a_N},\quad 
t_i \in \C^{\times}, \; a_i\in \mathbb{Z}.
\]
Thus, there is the identification between $\chi(G)$ and $\mathbb{Z}^{N+1}$:
$$\chi=(a_0,\dots,a_N)\in \mathbb{Z}^{N+1}.$$
Then it is well-known that $V$ decomposes under the action of $G$ into weight spaces
\[
V=\bigoplus_{\chi \in \chi(G)}V_{\chi}, \qquad
V_{\chi}:=\set{v^{\ast}\in V | t\cdot v^{\ast} = \chi(t)\cdot
v^{\ast},\; t\in G}.
\]
\begin{definition}[Weight polytope]\rm
Let $v^{\ast}\in V\setminus \set{{\bf0}}$ be a nonzero vector in $V$ with
$$v^{\ast}=\sum_{\chi\in \chi(G)}v_{\chi},\qquad v_{\chi}\in V_{\chi}.$$
The \emph{weight polytope} of $v^{\ast}$ (with respect to $G$-action) is the
integral convex polytope in $\chi(G)\otimes \R\cong \R^{N+1}$
defined by
\[
\mathcal{N}_G(v^{\ast}):=\conv\set{\chi\in \chi(G)|v_{\chi}\neq
0}\subset \R^{N+1},
\]
where $\conv\set{A}$ denotes the convex hull of a finite set of
points $A$.
\end{definition}
In the case where $G$ is an algebraic torus, the Hilbert-Mumford criterion (Proposition $\ref{prop:numerical criterion}$) can be
restated as the following proposition.
\begin{proposition}[{\emph{The numerical criterion}} : \cite{Dolga} Theorem 9.2 ,
\;\cite{Gabor} Theorem 1.5.1]\label{prop:numerical criterion2}
Suppose $G$ is isomorphic to an algebraic torus which acts a complex vector space $V$ linearly. Let $v^{\ast}$ be a nonzero vector
in $V$.
Then
\begin{itemize}
\item[(i)] $v^{\ast}$ is $G$-semistable if and only if
$\mathcal{N}_G(v^{\ast})$ contains the origin. \ \\
\item[(ii)] $v^{\ast}$ is $G$-polystable if and only if
$\mathcal{N}_G(v^{\ast})$ contains the origin in its interior.
\end{itemize}
\end{proposition}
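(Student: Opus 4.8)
The plan is to reduce both equivalences to a single convex-analytic statement about the real-torus norm function, together with the classical reduction of the Hilbert--Mumford criterion to one-parameter subgroups. Fixing an isomorphism $G\cong(\C^{\times})^{N+1}$ and a Hermitian norm on $V$ for which the weight spaces $V_\chi$ are orthogonal, I write $t_i=e^{\xi_i/2}e^{\sqrt{-1}\,\theta_i}$ and set $f(\xi)=\|t\cdot v^{\ast}\|^2=\sum_{\chi}\|v_\chi\|^2\,e^{\la\xi,\chi\ra}$, a convex function on $\R^{N+1}$ whose support is exactly $S:=\{\chi:v_\chi\neq 0\}$, so that $\mathcal{N}_G(v^{\ast})=\conv S$. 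Since the angular variables $\theta_i$ range over the compact torus and leave $\|t\cdot v^{\ast}\|$ unchanged, the two facts I need are: (a) $0\in\overline{\O_G(v^{\ast})}$ if and only if $\inf_\xi f(\xi)=0$; and (b) $\O_G(v^{\ast})$ is closed if and only if $f$ attains its infimum. Fact (a) is immediate from the remark that the Euclidean and Zariski closures of the orbit coincide, and fact (b) is the Kempf--Ness description of closed orbits (equivalently, it can be extracted from the one-parameter reduction of Proposition \ref{prop:numerical criterion}).

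For part (i) I would show that $\inf_\xi f(\xi)=0$ is equivalent to $0\notin\conv S$. If $0\notin\conv S$, strict separation of a point from a compact convex set yields $\eta\in\R^{N+1}$ with $\la\eta,\chi\ra<0$ for every $\chi\in S$; taking $\xi=r\eta$ with $r\to+\infty$ drives every exponential to $0$, so $\inf f=0$ and $v^{\ast}$ fails to be semistable. Because $\conv S$ is a rational polytope, $\eta$ may be chosen integral, i.e. a genuine one-parameter subgroup $\lambda$ realizing the degeneration $\lim_{s\to 0}\lambda(s)\cdot v^{\ast}=0$. Conversely, if $0\in\conv S$ I write $0=\sum_{\chi}c_\chi\chi$ with $c_\chi\geq 0$ and $\sum_\chi c_\chi=1$; weighted AM--GM gives $\sum_{\chi}c_\chi e^{\la\xi,\chi\ra}\geq e^{\la\xi,\,\sum_\chi c_\chi\chi\ra}=1$, and bounding $\|v_\chi\|^2\geq m:=\min_{\chi:\,c_\chi>0}\|v_\chi\|^2$ together with $c_\chi\leq 1$ yields $f(\xi)\geq m>0$ uniformly in $\xi$. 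Hence $\inf f>0$, the origin lies outside the closure, and $v^{\ast}$ is semistable. This establishes the first equivalence.

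For part (ii) the point is that, via fact (b), polystability amounts to the attainment of $\inf f$, and a convex sum of positive exponentials of this form attains its infimum precisely when it is coercive, which happens exactly when $0\in\mathrm{int}(\conv S)$. Indeed, if $0$ is interior then every direction $\eta\neq 0$ pairs positively with some $\chi\in S$, forcing $f(r\eta)\to+\infty$ and hence compact sublevel sets; whereas if $0\in\partial(\conv S)$ a supporting functional $\eta$ makes $f$ strictly decreasing along $-\eta$, so the infimum is approached but never attained. In the boundary case the failure of closedness is visible concretely: the integral supporting direction gives a one-parameter subgroup whose limit $\lim_{s\to 0}\lambda(s)\cdot v^{\ast}=\sum_{\chi\in F}v_\chi$ is supported on the proper face $F=\{\chi\in S:\la\eta,\chi\ra=0\}$, hence has strictly smaller support than any point of $\O_G(v^{\ast})$ and lies in $\overline{\O_G(v^{\ast})}\setminus\O_G(v^{\ast})$. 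Conversely, when $0\in\mathrm{int}(\conv S)$ coercivity makes the sublevel sets of $f$ compact, so any sequence $t^{(k)}\cdot v^{\ast}$ converging in $V$ has bounded radial parts $\xi^{(k)}$; together with compactness of the angular torus, a subsequence of $t^{(k)}$ converges in $G$ and its limit represents the limit point, proving the orbit closed.

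The main obstacle I anticipate is not the convex analysis, which is routine, but the two bridging facts (a) and (b) that translate the algebro-geometric notions into statements about $f$: fact (a) is soft given the coincidence of the Euclidean and Zariski closures, but the closed-orbit criterion (b)---that a semistable orbit is closed exactly when the norm attains its minimum on it---is the substantive input. I would either cite Kempf--Ness directly or, to remain within the one-parameter framework already available, seal closedness through the coercivity and compactness argument above. The one subtlety demanding care is that ``interior'' must be read as \emph{relative} interior within the affine span of $\mathcal{N}_G(v^{\ast})$; equivalently, one assumes the $G$-action is effective so that the weight polytope is full-dimensional, since otherwise the ambient interior is empty while closed orbits can still occur.
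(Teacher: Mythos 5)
The paper does not prove Proposition \ref{prop:numerical criterion2} at all --- it is quoted from \cite{Dolga} and \cite{Gabor} --- so there is no internal proof to compare against; what you have written is essentially the standard norm-minimization (Kempf--Ness) proof of the torus numerical criterion, and it is correct in substance. Part (i) is complete as written: the strict separation argument and the weighted AM--GM bound $f(\xi)\geq m>0$ together with the coincidence of Euclidean and Zariski orbit closures settle both directions. For part (ii) your coercivity-plus-compactness argument for closedness and the explicit face degeneration $\lim_{s\to 0}\lambda(s)\cdot v^{\ast}=\sum_{\chi\in F}v_\chi$ (which has strictly smaller support than every point of the orbit, hence lies off the orbit) are the right mechanisms, and they make the appeal to Kempf--Ness dispensable rather than essential.

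The one point that deserves emphasis is the caveat you relegate to your final paragraph: as literally stated, part (ii) is false unless ``interior'' is read as \emph{relative} interior. Your own intermediate claim that $f$ attains its infimum precisely when it is coercive is where this surfaces --- if the weights of $v^{\ast}$ span a proper subspace, $f$ is constant along the orthogonal directions, so it can attain its minimum without being coercive, and the orbit can be closed while $0$ lies on the ambient boundary of $\mathcal{N}_G(v^{\ast})$. The repair is exactly the one you indicate (work in the affine span of the weights; a supporting functional not annihilating all of $\conv S$ exists iff $0\notin\mathrm{relint}$), though note that effectiveness of the $G$-action on $V$ does not by itself force the weight polytope of a \emph{particular} $v^{\ast}$ to be full-dimensional, so the relative-interior reading is the correct fix rather than an added hypothesis. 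This is not a pedantic point for this paper: by Theorems \ref{th:gkz} and \ref{th:ksz} the relevant weight polytope $\sec(A)$ has dimension $N-n$ inside $\R^{N+1}$, and its projection $\pi_H(\sec(A))$ has dimension $N-n$ inside $\R^{N}$, so the polytopes to which the criterion is applied in Section \ref{sec:main} are never full-dimensional, and the proof of Theorem \ref{th:main} silently uses the relative-interior version. Your proof, with that reading made explicit throughout part (ii), is complete.
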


\subsection{Chow form}
Now we recall the definition of the Chow form of irreducible complex
projective varieties. See \cite{GKZ} for more details.

Let $X\longrightarrow \P^N$ be an $n$-dimensional irreducible
complex projective variety of degree $d\geqslant 2$. Recall that the
Grassmann variety $\mathbb{G}(k,\P^N)$ parameterizes $k$-dimensional
projective linear subspaces of $\P^N$.
\begin{definition}[Associated hypersurface]\rm
The \emph{associated hypersurface} of $X\longrightarrow \P^N$ is the
subvariety in $\mathbb{G}(N-n-1, \P^N)$ which is given by
\[
Z_X:=\set{L\in \G(N-n-1,\P^N)| L\cap X\neq \emptyset}.
\]
\end{definition}
The fundamental properties of $Z_X$ can be summarized as follows (see
\cite{GKZ}, p.$99$):
\begin{itemize}
\item[(1)] $Z_X$ is irreducible,
\item[(2)] $\mathrm{Codim\:} Z_X=1$ (that is, $Z_X$ is a divisor in
$\G(N-n-1,\P^N)$),
\item[(3)] $\deg Z_X=d$ in the Pl\"ucker coordinates, and
\item[(4)] $Z_X$ is given by the vanishing of a section $R_X^{\ast}\in
H^0(\G(N-n-1,\P^N),\O(d))$.
\end{itemize}
We call $R_X^{\ast}$ the \emph{Chow form} of $X$. Note that
$R_X^{\ast}$ can be determined up to a multiplicative constant. Setting
$V:=H^0(\G(N-n-1,\P^N),\O(d))$ and $R_X \in \P(V)$ which is the
projectivization of $R_X^{\ast}$, we call $R_X$ the \emph{Chow point} of
$X$. Since we have the natural action of $G={\rm{SL}}(N+1,\C)$ into
$\P(V)$, we can define ${\rm{SL}}(N+1)$-polystability
(resp. semistability) of $R_X$.
\begin{definition}[Chow stability]\rm
Let $X\longrightarrow \P^N$ be an irreducible, $n$-dimensional
complex projective variety. Then $X$ is said to be \emph{Chow
polystable (resp. semistable)} if the Chow point $R_X$ of $X$ is
${\rm{SL}}(N+1,\C)$-polystable (resp. semistable).
\end{definition}

\section{Secondary polytopes and Regular triangulations}\label{sec:GKZ}
\subsection{A construction of secondary polytopes}
In this section we recall the definition of secondary polytope
and its fundamental property. For more details, see
\cite{GKZ,Triangulation}.
 
Let $A=\set{{\bf{a}}_0,\dots,{\bf{a}}_N}$ be a finite subset in $\mathbb{Z}^n$ which satisfies $(*)$. Let $Q$ be the convex hull of $A$ in $\R^n$ as usual. To begin, we construct the \emph{regular triangulation} of $(Q,A)$ as follows: \\
\begin{description}
\item[Step1. (Lifting)] Pick a \emph{height function} $\omega: A \longrightarrow \R$ which can be thought of as a vector
$\omega=(\omega_0,\dots,\omega_N)\in \R^{N+1}$ with $\omega({\bf{a}}_i)=\omega_i$. Using the coordinate of $\omega$ as `heights', we consider the \emph{lifted finite set} in $\R^{n+1}$, defined by
\[
A^{\omega}:=\set{{\hat {\bf{a}}}_0, \dots ,\hat{\bf{a}}_N} \subset \R^{n+1}, \quad \hat{\bf{a}}_i=
\begin{pmatrix}
{\bf{a}}_i \\
\omega_i
\end{pmatrix}.
\]
\item[Step2. (Lower Face)] 
Let $Q^{\omega}$ be the convex hull of $A^{\omega}$ in $\R^{n+1}$. A face $F$ of $Q^{\omega}$ is said to be a \emph{lower face} if it satisfies
\[
{\bf x}-c{\bf e}_{n+1}\notin Q^{\omega} \quad \text{for each}\quad {\bf x}\in F \quad \text{and}\quad c>0.
\]
Here ${\bf e}_{n+1}=(0,\dots,0,1) \in \R^{n+1}$.\vspace{0.3cm}
\item[Step3. (Projection)] Let $p$ denote the canonical projection
\[
p: \R^{n+1} \longrightarrow \R^n \qquad (x_1,\dots, x_{n+1})
\longmapsto (x_1,\dots, x_n).
\]
Then, if all lower faces of $Q^{\omega}$ are simplices, their projections
\[
\set{p(F)| F \text{ is a lower face of }
Q^{\;\omega}}
\]
form a triangulation of $(Q,A)$.
\end{description}

\begin{definition}[Regular Triangulation]\label{def:RegTri}\rm
Let $A$ and $Q$ be as above. A triangulation of $(Q,A)$ is called \emph{regular} if it can be obtained by projecting all the
lower faces of a lifted finite set $A^{\omega}$ in $\R^{n+1}$ for some $\omega \in \R^{N+1}$.
\end{definition}
Let $A$ and $Q$ be as above and let $T$ be a triangulation of $(Q,A)$. Let $J=\set{0, \dots, N}$ be
the index set of labels. Fix a point ${\bf{a}}_j \in A$. Let ${\rm{Vol}} (\cdot)$ denote a translation invariant volume form on $\R^n$ with the
normalization ${\rm{Vol}} (\Delta_n)=1/{n!}$ for the standard $n$-dimensional simplex
 $\Delta_n=\conv \set{{\bf{e}}_i | 1\leqslant i \leqslant n}$. 
For any simplex $C$ of $T$, we denote the set of vertices of $C$ by $\mathcal{V}(C)$. Then we consider the function 
$\phi_{A,T}: A\longrightarrow \R$ defined by
\[
\phi_{A,T}({\bf{a}}_j)=\sum_{C: {\bf{a}}_j \in \mathcal{V}(C)}
n!{\rm{Vol}}(C)
\]
where the summation is over all maximal simplices of $T$ for which ${\bf{a}}_j$ is a vertex. Especially, $\phi_{A,T}({\bf{a}}_j)=0$ for $j\in J$
if and only if ${\bf{a}}_j\in A$ is not a vertex of any simplex of $T$. Then the \emph{Gelfand-Kapranov-Zelevinsky (GKZ) vector} of $T$ is given by
\[
\phi_A(T)=\sum_{j\in J}\phi_{A,T}({\bf{a}}_j){\bf{e}}_j \in \R^{N+1}
\]
where ${\bf{e}}_j$ for $j\in J$ is the standard basis of $\R^{N+1}$. 
\begin{definition}[Secondary polytope]\rm
The \emph{secondary polytope} $\sec(A)$ is the polytope in $\R^{N+1}$ defined
by
\[
\sec(A)=\conv\set{\phi_A(T)| T \text{\;is a triangulation of\;}
 (Q,A)}.
\]
\end{definition}
The following properties of secondary polytopes are well-known.
\begin{theorem}[\cite{GKZ} p.$221$, Theorem $1.7$]\label{th:gkz}
For a finite subset $A=\set{{\bf{a}}_0, \dots, {\bf{a}}_N}$ in $\mathbb{Z}^n$ which satisfies $(*)$, we have
\begin{itemize}
\item[(i)] $\dim \sec(A)=N-n$.
\item[(ii)] There is a one to one correspondence between the regular
triangulations of $(Q,A)$ and vertices of $\sec(A)$. In particular, the
GKZ-vector $\phi_A(T)$ for a triangulation $T$ of $(Q,A)$ will be
a vertex of $\sec(A)$ if and only if $T$ is regular.
\end{itemize}
\end{theorem}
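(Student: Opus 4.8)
The plan is to prove both assertions by realizing $\sec(A)$ through the linear functionals that expose its faces, taking the height vectors $\omega$ as those functionals. First I would dispose of part (i) by recording the affine relations satisfied by every GKZ-vector. Summing the coordinates of $\phi_A(T)$ and using that each maximal simplex of a triangulation has exactly $n+1$ vertices gives $\sum_{j\in J}\phi_{A,T}({\bf a}_j)=(n+1)!\,{\rm{Vol}}(Q)$, which is independent of $T$. Weighting the same sum by the points ${\bf a}_j$, using $\sum_{{\bf a}_j\in\mathcal{V}(C)}{\bf a}_j=(n+1)\cdot(\text{barycenter of }C)$ together with the additivity of $\int_Q{\bf x}\,d{\bf x}$ over the simplices of $T$, yields the $\R^n$-valued identity $\sum_{j\in J}\phi_{A,T}({\bf a}_j){\bf a}_j=(n+1)!\int_Q{\bf x}\,d{\bf x}$, again independent of $T$. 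These $n+1$ relations confine $\sec(A)$ to an affine flat of dimension at most $N-n$. For the matching lower bound I would identify the lineality space of the normal (secondary) fan of $\sec(A)$ with the $(n+1)$-dimensional space of restrictions to $A$ of affine functions on $\R^n$: replacing $\omega$ by $\omega+(\text{affine})$ shifts every lifted point by the graph of that affine function and hence leaves the lower faces of $Q^\omega$, and so the induced triangulation, unchanged. Passing to the quotient $\R^{N+1}/(\text{affine functions})$, of dimension $N-n$, the secondary fan is complete, forcing $\dim\sec(A)=N-n$.

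For part (ii) the heart of the matter is a pairing identity. Given $\omega\in\R^{N+1}$ and a triangulation $T$, let $g_{T,\omega}\colon Q\to\R$ be the piecewise-affine interpolant equal to $\omega_j$ at each vertex ${\bf a}_j$ of $T$ and affine on each simplex. Since the integral of an affine function over a simplex equals its volume times the mean of its vertex values, I would establish
\[
\la\omega,\phi_A(T)\ra=(n+1)!\int_Q g_{T,\omega}\,d{\bf x}.
\]
Next, for generic $\omega$ the regular triangulation $T_\omega$ of Steps 1--3 has interpolant $g_{T_\omega,\omega}$ equal to the lower convex envelope $h$ of the lifted points, that is, the largest convex function on $Q$ with $h({\bf a}_j)\le\omega_j$ for all $j$. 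For any other triangulation $T$ and any ${\bf x}$ lying in a simplex $C$ of $T$, writing ${\bf x}=\sum_{{\bf a}_j\in\mathcal{V}(C)}\lambda_j{\bf a}_j$ in barycentric coordinates, convexity of $h$ gives $h({\bf x})\le\sum_j\lambda_j h({\bf a}_j)\le\sum_j\lambda_j\omega_j=g_{T,\omega}({\bf x})$, so $g_{T_\omega,\omega}\le g_{T,\omega}$ pointwise and hence $\la\omega,\phi_A(T_\omega)\ra\le\la\omega,\phi_A(T)\ra$. Thus $\la\omega,\cdot\ra$ is minimized over $\sec(A)$ at $\phi_A(T_\omega)$, and for generic $\omega$ uniquely so, exhibiting every regular triangulation as a vertex. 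Conversely, every vertex of $\sec(A)$ has a full-dimensional normal cone and so is the unique minimizer of a generic linear functional $\omega$; for such $\omega$ the minimizer is $\phi_A(T_\omega)$ with $T_\omega$ regular, so every vertex arises from a regular triangulation, giving the claimed bijection.

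The step I expect to be the main obstacle is the strictness in part (ii): showing that a non-regular, or merely different, triangulation forces $g_{T,\omega}>h$ on a set of positive measure, so that the minimum of $\la\omega,\cdot\ra$ is attained \emph{only} at $\phi_A(T_\omega)$. This rests on the genericity of $\omega$---ensuring that all lower faces of $Q^\omega$ are simplices, so that $T_\omega$ is a genuine triangulation---and on the characterization of $h$ as the maximal convex minorant of the lifted data. The dimension count in part (i) is technically lighter but still hinges on correctly pinning down the lineality space of the secondary fan.
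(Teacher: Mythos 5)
The paper does not prove this statement: it is imported verbatim from Gelfand--Kapranov--Zelevinsky (\cite{GKZ}, Ch.~7, Theorem~1.7), so there is no in-paper proof to compare against. Your reconstruction is essentially the standard GKZ argument, and its two pillars are sound: the pairing identity $\la \omega,\phi_A(T)\ra=(n+1)!\int_Q g_{\omega,T}\,dv$ (which the paper itself records as Proposition~\ref{prop:gkz1} and Lemma~\ref{lem:triangulation}), and the characterization of $g_{\omega,T_\omega}$ as the lower convex envelope of the lifted data, which makes $\phi_A(T_\omega)$ the minimizer of $\la\omega,\cdot\ra$ and, for generic $\omega$, the unique one. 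Your two affine relations $\sum_j\phi_{A,T}({\bf a}_j)=(n+1)!\,\mathrm{Vol}(Q)$ and $\sum_j\phi_{A,T}({\bf a}_j){\bf a}_j=(n+1)!\int_Q{\bf x}\,dv$ are correct and give $\dim\sec(A)\leqslant N-n$, using that $A$ affinely spans $\mathbb{Z}^n$ so that the map $\varphi\mapsto(\sum_j\varphi_j,\sum_j\varphi_j{\bf a}_j)$ is onto $\R^{n+1}$.

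Two points deserve tightening. First, for the reverse inclusion in the dimension count you assert that the lineality space of the normal fan is exactly the affine functions, but only prove the containment ``affine $\Rightarrow$ constant on $\sec(A)$.'' The missing direction follows from your own pairing identity: if $\la\omega,\phi_A(T)\ra$ is independent of $T$, then $\int_Q g_{\omega,T}\,dv$ is constant over all $T$; applying the envelope inequality to both $\omega$ and $-\omega$ forces the lower and upper envelopes of $A^\omega$ to coincide, i.e.\ $\omega$ is the restriction of an affine function. Second, the phrase ``one to one correspondence'' also requires injectivity of $T\mapsto\phi_A(T)$ on regular triangulations: if $T\neq T'$ are both regular with $\phi_A(T)=\phi_A(T')$, pick $\omega$ generic in the secondary cone of $T$; your strictness claim gives $\la\omega,\phi_A(T)\ra<\la\omega,\phi_A(T')\ra$, a contradiction. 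Both repairs use only machinery already present in your sketch, and the strictness step you flag as the main obstacle is indeed where the genericity of $\omega$ (every ${\bf a}_k$ not used by $T_\omega$ is lifted strictly above the lower hull, and the maximal domains of affinity of the envelope are exactly the cells of $T_\omega$) must be invoked.
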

In order to see the relationship between secondary polytopes and Chow polytopes of toric varieties, we first quick review on
the construction of toric varieties. See \cite{GKZ}, Chapter $5$ for more details. Recall that a toric variety is a complex irreducible algebraic variety with a complex torus action having an open dense orbit. As usual, let $A=\set{{\bf a}_0,\dots ,{\bf a}_N}$ be a finite set of integer vectors in $\mathbb{Z}^n$ which satisfies $(*)$. Setting
\[
X_A^0=\set{[{\bf{x}}^{{\bf{a}}_0}:\dotsc :{\bf{x}}^{{\bf{a}}_N}]\in
\P^N| {\bf{x}}=(x_1,\dots,x_n)\in (\C^{\times})^n},
\]
we define the variety $X_A\subset \P^N$ to be the closure of $X_A^0$
in $\P^N$. Then $X_A$ is an $n$-dimensional equivariantly embedded subvariety in $\P^N$. Then we require the following result.
\begin{theorem}[Kapranov-Sturmfels-Zelevinsky
\cite{KSZ}]\label{th:ksz} 
Let $A\subset \mathbb{Z}^n$ be a finite set which satisfies $(*)$. Let $X_A \subset \P^N$ be the associated toric variety.
Let $R_{X_A}$ be the Chow point of $X_A$. Then the weight polytope
$\mathcal{N}_{(\C^{\times})^{N+1}}(R_{X_A})$ of $R_{X_A}$ with respect
to the algebraic torus action $(\C^{\times})^{N+1}$ (i.e., the \emph{Chow polytope} of $X_A$) coincides with the secondary
polytope $\sec(A)$.
\end{theorem}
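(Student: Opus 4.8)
The plan is to identify the Chow polytope $\mathcal{N}_{(\C^{\times})^{N+1}}(R_{X_A})$ with $\sec(A)$ by matching their vertices, exploiting the general principle that every vertex of a weight polytope is detected by a generic one-parameter subgroup. A one-parameter subgroup of $(\C^{\times})^{N+1}$ is recorded by an integer weight vector $\omega=(\omega_0,\dots,\omega_N)$, and the vertex of $\mathcal{N}_{(\C^{\times})^{N+1}}(R_{X_A})$ that is extremal in the direction $\omega$ is the $(\C^{\times})^{N+1}$-weight of the initial form $\mathrm{in}_\omega R_{X_A}$ of the Chow form. The first step is therefore to recall the functoriality of Chow forms under flat degeneration: if $\lambda_\omega$ denotes the one-parameter subgroup with weights $\omega$ and $X_0=\lim_{t\to 0}\lambda_\omega(t)\cdot X_A$ is the flat limit taken in the sense of algebraic cycles, then $\mathrm{in}_\omega R_{X_A}$ coincides, up to a nonzero scalar, with the Chow form of $X_0$. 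Computing the extremal vertex thus reduces to understanding the degenerate cycle $X_0$.

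The second step is the combinatorial core: to identify $X_0$ with the regular subdivision of $(Q,A)$ induced by $\omega$. Because $X_A$ is cut out by the monomial relations coming from $A$, its initial degeneration with respect to $\omega$ splits into toric strata indexed by the maximal cells of the subdivision obtained by projecting the lower faces of the lifted polytope $Q^{\omega}$ (Steps 1--3 of the construction of regular triangulations, possibly after replacing $\omega$ by $-\omega$). When $\omega$ is generic, so that the induced subdivision is a genuine regular triangulation $T$, each maximal cell is a lattice simplex $C$ whose associated stratum is the coordinate linear space $\P\bigl(\mathcal{V}(C)\bigr)\subset \P^N$ spanned by the vertices of $C$, appearing in $X_0$ with multiplicity equal to the normalized volume $n!\,{\rm{Vol}}(C)$, i.e. the lattice index of $C$. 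I would verify this by a local toric computation, checking that the initial ideal $\mathrm{in}_\omega I(X_A)$ is the Stanley--Reisner-type ideal of $T$ carrying exactly these embedded multiplicities.

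The third step reads off the weight of the Chow form of $X_0$. The Chow form of a single coordinate linear space $\P\bigl(\mathcal{V}(C)\bigr)$ is, up to scalar, a single bracket, and the torus $(\C^{\times})^{N+1}$ acts on it with weight $\sum_{\mathbf{a}_j\in\mathcal{V}(C)}\mathbf{e}_j$; since the Chow form of the cycle $X_0=\sum_C n!\,{\rm{Vol}}(C)\,\P(\mathcal{V}(C))$ is the corresponding product, weights add with multiplicities. Collecting the coefficient of $\mathbf{e}_j$ over all maximal simplices $C$ containing $\mathbf{a}_j$ yields $\sum_{C:\,\mathbf{a}_j\in\mathcal{V}(C)} n!\,{\rm{Vol}}(C)=\phi_{A,T}(\mathbf{a}_j)$, so the extremal vertex is precisely the GKZ vector $\phi_A(T)$. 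By Theorem \ref{th:gkz} the vertices of $\sec(A)$ are exactly the $\phi_A(T)$ for regular $T$, and conversely every vertex of the Chow polytope is detected by some generic $\omega$ and hence by a regular triangulation; thus the two vertex sets agree and the two polytopes coincide.

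The step I expect to be the main obstacle is the second one: rigorously establishing that the flat limit $X_0$ is the union of coordinate subspaces dictated by the regular triangulation, \emph{with} the correct normalized-volume multiplicities. This requires controlling the full scheme structure of the initial ideal $\mathrm{in}_\omega I(X_A)$ rather than merely its radical, and matching its embedded multiplicities with lattice volumes of simplices. The functoriality invoked in the first step also demands care, since the degeneration must be taken in the sense of cycles so that multiplicities, and not just supports, are preserved under passage to the initial form.
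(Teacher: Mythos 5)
The paper does not prove this statement: Theorem \ref{th:ksz} is imported as a black box from \cite{KSZ}, so there is no internal proof to compare against. Your outline is, in substance, the original Kapranov--Sturmfels--Zelevinsky argument, and it is sound as a strategy: (i) the compatibility of Chow forms with one-parameter degenerations of cycles (properness of the Chow variety gives $\mathrm{in}_\omega R_{X_A}=R_{X_0}$ up to scalar); (ii) the identification of the initial cycle of $X_A$ with $\sum_{C\in T} n!\,{\rm{Vol}}(C)\,[L_C]$, where $L_C$ is the coordinate subspace indexed by $\mathcal{V}(C)$ and $T$ is the regular triangulation induced by a generic $\omega$; (iii) reading off the weight of the product of brackets $\prod_C [\mathcal{V}(C)]^{n!{\rm{Vol}}(C)}$ as $\phi_A(T)$, and closing the argument with Theorem \ref{th:gkz}(ii) to match vertex sets. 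You correctly locate the real content in step (ii); be aware that this is itself a theorem (the radical of $\mathrm{in}_\omega I_A$ is the Stanley--Reisner ideal of $T$ and the multiplicity along each minimal prime $L_C$ is the normalized volume --- see \cite{KSZ} or Sturmfels' book on Gr\"obner bases and convex polytopes, Theorem 8.3), and your phrase ``embedded multiplicities'' should read ``multiplicities along the minimal primes,'' since the cycle-theoretic limit is blind to embedded components. Two smaller points deserve care if you write this out: the Chow form must be taken in Stiefel coordinates (a multihomogeneous polynomial in $n+1$ linear forms) for each variable $u^{(i)}_j$ to carry weight ${\bf e}_j$ and for the bracket $[\mathcal{V}(C)]$ to have weight $\sum_{j\in\mathcal{V}(C)}{\bf e}_j$, which is what places the Chow polytope in the hyperplane $\sum_j\varphi_j=(n+1)!{\rm{Vol}}(Q)$ where $\sec(A)$ lives; and the sign convention relating $\omega$ to lower versus upper faces must be fixed consistently between the initial form and the lifted polytope.
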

Next we define a piecewise-linear function $g_{\omega,T}: Q\longrightarrow \R$ as follows.
Let $T$ be a triangulation of $(Q,A)$ and let $\omega \in \R^{N+1}$ be a
height function. The \emph{characteristic section} of $T$ with
respect to $\omega$ is a piecewise-linear function which is defined
by
\[
g_{\omega,T}: Q\longrightarrow \R \qquad {\bf{a}}_i \longmapsto
g_{\omega,T}({\bf{a}}_i)=\omega_i
\]
and extended affinely on $C$ for each maximal simplex $C$ of $T$. Remark that in the definition of the characteristic section, 
we do not require $\omega$ to be the height function that induces the triangulation $T$. 

\begin{proposition}[\cite{GKZ} p.$221$, Lemma $1.8$]\label{prop:gkz1} 
Let $\omega \in \R^{N+1}$ be a height
function and let $T$ be any triangulation of $(Q,A)$. For the
characteristic section $g_{\omega,T}$ of $T$ with respect to $\omega$ and the GKZ-vector
$\phi_A(T)$, we have
\begin{align*}
\la \omega, \phi_A(T) \ra  =& (n+1)!\int_Q g_{\omega,T}({\bf{x}})dv.
\\
\end{align*}
\end{proposition}

We finish this subsection with the following lemma.

\begin{lemma}\label{lem:triangulation}
Let $\omega, T, g_{\omega,T}$ and $\phi_A(T)$ be as in Proposition $\ref{prop:gkz1}$.  For each maximal simplex
$C$ of $T$, we have
\begin{equation}\label{eq:IntSimp}
\int_C g_{\omega,T}({\bf{x}})dv=\frac{{\rm{Vol}}(C)}{n+1}\sum_{j\in
C}\omega_j.
\end{equation}
Moreover,
\begin{equation}\label{eq:GKZVol}
\la \omega, \phi_A(T) \ra  = n!\sum_{C\in T}{\rm{Vol}}(C)\sum_{j\in C}\omega_j
\end{equation}
where the first summation runs over all maximal simplices of $T$.
\end{lemma}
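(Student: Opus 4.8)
The plan is to establish \eqref{eq:IntSimp} first as an elementary fact about integrating an affine function over a simplex, and then to deduce \eqref{eq:GKZVol} by summing \eqref{eq:IntSimp} over the maximal simplices of $T$ and invoking Proposition \ref{prop:gkz1}.

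For \eqref{eq:IntSimp} I would fix a maximal simplex $C$ of $T$ with vertex set $\set{{\bf{a}}_j | j\in C}$; since $C$ is $n$-dimensional these $n+1$ points are affinely independent, so every ${\bf{x}}\in C$ has unique barycentric coordinates $\lambda_j=\lambda_j({\bf{x}})\geqslant 0$ with $\sum_{j\in C}\lambda_j=1$ and ${\bf{x}}=\sum_{j\in C}\lambda_j{\bf{a}}_j$. Because $g_{\omega,T}$ restricts on $C$ to the affine function taking the value $\omega_j$ at ${\bf{a}}_j$, affineness gives $g_{\omega,T}({\bf{x}})=\sum_{j\in C}\lambda_j\omega_j$, whence $\int_C g_{\omega,T}\,dv=\sum_{j\in C}\omega_j\int_C\lambda_j\,dv$. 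The integrals $\int_C\lambda_j\,dv$ are all equal by the symmetry of the barycentric coordinates under permuting the vertices of $C$, and since $\sum_{j\in C}\lambda_j\equiv 1$ their sum is ${\rm{Vol}}(C)$; hence each equals ${\rm{Vol}}(C)/(n+1)$, which is precisely \eqref{eq:IntSimp}. Equivalently, one may observe that $\int_C g_{\omega,T}\,dv={\rm{Vol}}(C)\,g_{\omega,T}(\bar{{\bf{x}}})$ for the barycenter $\bar{{\bf{x}}}=\frac{1}{n+1}\sum_{j\in C}{\bf{a}}_j$, and that the value of an affine function at the barycenter is the average of its vertex values.

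To obtain \eqref{eq:GKZVol} I would use that $T$ is a triangulation of $(Q,A)$, so its maximal simplices cover $Q$ and pairwise intersect only along common faces of dimension $<n$, which have Lebesgue measure zero; consequently $\int_Q g_{\omega,T}\,dv=\sum_{C\in T}\int_C g_{\omega,T}\,dv$. Substituting \eqref{eq:IntSimp} and applying Proposition \ref{prop:gkz1} then gives
\[
\la \omega,\phi_A(T)\ra=(n+1)!\int_Q g_{\omega,T}\,dv=(n+1)!\sum_{C\in T}\frac{{\rm{Vol}}(C)}{n+1}\sum_{j\in C}\omega_j=n!\sum_{C\in T}{\rm{Vol}}(C)\sum_{j\in C}\omega_j,
\]
where the simplification $(n+1)!/(n+1)=n!$ produces the stated constant. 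The argument is entirely routine; the only points requiring care are the symmetry identification of each $\int_C\lambda_j\,dv$ with ${\rm{Vol}}(C)/(n+1)$ (equivalently the barycenter evaluation) and the remark that the overlaps between distinct maximal simplices of $T$ are negligible for the integral. Neither is a genuine obstacle, so the essential content is just the affine-integral formula of the second paragraph, with the factorials inherited from Proposition \ref{prop:gkz1}.
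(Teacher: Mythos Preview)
Your proof is correct and follows essentially the same approach as the paper: the paper also reduces \eqref{eq:GKZVol} to \eqref{eq:IntSimp} via Proposition~\ref{prop:gkz1}, and proves \eqref{eq:IntSimp} by evaluating the affine function $g_{\omega,T}$ at the barycenter of $C$ (your ``equivalently'' remark). Your barycentric-coordinate computation is a mild variant of the same idea, and your explicit mention that the maximal simplices overlap only in measure zero makes the summation step slightly more explicit than in the paper.
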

\begin{proof}
\eqref{eq:GKZVol} follows from \eqref{eq:IntSimp} and Proposition $\ref{prop:gkz1}$. Hence it suffices to show \eqref{eq:IntSimp}.

From the definition of $g_{\omega,T}$, we have $g_{\omega,T}({\bf{a}}_j)=\omega_j$.
Note that the integral of a linear function on a domain is equal to
the multiplication of the volume of a domain with the value of a
linear function at the barycenter. In our case, this implies
\begin{equation}\label{eq:piecewise lin fun2}
\int_{C}g_{\omega,T}({\bf{x}})dv={\rm{Vol}}(C)
g_{\omega,T}({\bf{b}}_C),
\end{equation}
where ${\bf{b}}_C$ is the barycenter of a simplex $C$. Now we use the
fact that the barycenter of a simplex is given by the average of its
vertices:
\begin{equation}\label{eq:barycenter}
{\bf{b}}_C:=\int_{C}{\bf{x}}dv=\frac{1}{n+1}\sum_{{\bf{a}}\in
\mathcal V(C)}{\bf{a}}.
\end{equation}
Note that we have the linearity of $g_{\omega,T}$ with respect to the barycenter (see \cite{Triangulation}, p.$219$).
Substituting \eqref{eq:barycenter} in \eqref{eq:piecewise lin fun2}, we have \eqref{eq:IntSimp}.
The assertion is verified.
\end{proof}

\subsection{Facets of the secondary polytope}\label{sec:facets}
We describe the faces of secondary polytopes. The facets
(i.e., codimension $1$ faces) of $\sec(A)$ correspond to maximal regular subdivisions of $(Q,A)$.
These are called \emph{coarse subdivisions} (see \cite{GKZ}, Chapter $7$ section $2$ B). 

Let $A=\set{{\bf{a}}_0,\dots, {\bf{a}}_N}$ be a finite subset in $\mathbb{Z}^n$ which satisfies $(*)$ and let $Q$ be the convex hull of $A$ in $\R^n$. Recall that a subdivision of $(Q,A)$ is called \emph{regular} if it can be obtained by projecting all the lower faces of a lifted finite set
$A^{\omega}$ for some $\omega$ (Definition $\ref{def:RegTri}$). Let $\S(A,\omega)$ denote the regular subdivision of $(Q,A)$ produced by $\omega$. We will find the defining equation of the facet of $\sec(A)$ corresponding to a certain coarse subdivision $\S(A,\omega)$.
To begin, we shall define a \emph{refinement} of a polyhedral subdivision.
\begin{definition}[Refinement]\rm
Let $S$ and $S'$ be two subdivisions of $(Q,A)$. Then $S$ is said to
be a \emph{refinement} of $S'$ if for any $C\in S$, there is a $C'\in S'$ with $C\subseteq C'$.
We denote it by $S \preceq S'$.
\end{definition}
The following theorem due to Gelfand, Kapranov and Zelevinsky gives a combinatorial description of
 the faces of secondary polytopes. (cf. Theorem $\ref{th:gkz}$).
\begin{theorem}[\cite{GKZ} p.228, Theorem 2.4]\label{th:gkz2} Let $S$ be any regular subdivision of $(Q,A)$.
Let $F(S)$ denote the convex hull in $\R^{N+1}$ of the GKZ-vectors
for all triangulations $T$ which is obtained by refining $S$:
\[
F(S)=\conv \set{\phi_A(T)|T \text{\;is a triangulation refining\;}
S}.
\]
Then two faces of $\sec(A)$ satisfy $F(S)\subset F(S')$ if and only
if $S\preceq S'$.
\end{theorem}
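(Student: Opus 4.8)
The plan is to realise every face of $\sec(A)$ as the locus on which a linear functional coming from a height vector attains its minimum, and then to read off the refinement relation directly from this description. The bridge is the integral formula of Proposition \ref{prop:gkz1} together with Lemma \ref{lem:triangulation}, which rewrite the pairing $\la \omega, \phi_A(T)\ra$ as $(n+1)!\int_Q g_{\omega,T}({\bf x})\,dv$. First I would introduce the lower envelope $g_\omega\colon Q\to \R$ whose graph is the union of the lower faces of the lifted polytope $Q^\omega$ from the construction in Section \ref{sec:GKZ}; concretely $g_\omega$ is the convex envelope, i.e. the largest convex function on $Q$ with $g_\omega({\bf a}_i)\leqslant \omega_i$, and by definition its domains of linearity are exactly the cells of the regular subdivision $\S(A,\omega)$.

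The key inequality is that for every triangulation $T$ the characteristic section dominates the lower envelope, $g_{\omega,T}({\bf x})\geqslant g_\omega({\bf x})$ for all ${\bf x}\in Q$, with equality everywhere precisely when each maximal simplex of $T$ lies in a single cell of $\S(A,\omega)$, that is, when $T$ refines $\S(A,\omega)$. This holds because $g_{\omega,T}$ is the affine interpolation of the heights $\omega_i$ over the simplices of $T$, while $g_\omega$ is the largest convex minorant of the same data. Integrating over $Q$ and applying the integral formula, I conclude that $\la\omega,\phi_A(T)\ra$ is minimised, among the GKZ-vectors of all triangulations, exactly on those $T$ refining $\S(A,\omega)$. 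Since $\sec(A)=\conv\set{\phi_A(T)}$ and its vertices are the regular triangulations (Theorem \ref{th:gkz}), the face of $\sec(A)$ on which $\omega$ attains its minimum is precisely $F(\S(A,\omega))$. As every proper face of a polytope is cut out by some linear functional, and any such functional is a height vector, this already shows that every face of $\sec(A)$ is of the form $F(S)$ for a regular subdivision $S$.

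The forward implication of the theorem is then immediate from transitivity of refinement: if $S\preceq S'$, every triangulation $T$ refining $S$ also refines $S'$, so the generating set of $F(S)$ sits inside that of $F(S')$ and hence $F(S)\subseteq F(S')$. For the converse, suppose $F(S)\subseteq F(S')$ and fix heights $\omega,\omega'$ with $S=\S(A,\omega)$ and $S'=\S(A,\omega')$. Because $F(S')$ is the \emph{entire} minimising face of $\omega'$ and $F(S)\subseteq F(S')$, the functional $\omega'$ is constant on $F(S)$, equal to its global minimum; consequently every triangulation refining $S$ already refines $S'$.

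The hard part is to upgrade the conclusion ``every triangulation refining $S$ also refines $S'$'' into the geometric statement $S\preceq S'$ about containment of cells, and this is where I expect the main obstacle. I would argue contrapositively: if some cell $C\in S$ is not contained in a single cell of $S'$, then the walls of $S'$ subdivide $C$ nontrivially, and I want to produce a regular triangulation of $(Q,A)$ that refines $S$ yet contains a maximal simplex inside $C$ straddling such a wall, hence failing to refine $S'$. Since $\omega$ is affine on $C$, the triangulations of $(C,A\cap C)$ arising from refinements of $S$ are exactly the regular triangulations of $(C,A\cap C)$, so the task reduces to finding a regular triangulation of a single cell that disrespects a prescribed regular wall; controlling the combinatorics of the crossing simplex using the lattice points $A\cap C$ while keeping the refinement regular is the delicate step. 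A more uniform alternative is the perturbation $\omega'+\epsilon\omega$ for small $\epsilon>0$: its minimisation over $\sec(A)$ is lexicographic, first selecting $F(S')$ and then minimising $\omega$ within it, so its minimising face is the $\omega$-minimal subface of $F(S')$, namely $F(S)$, while the standard perturbation lemma for regular subdivisions gives $\S(A,\omega'+\epsilon\omega)\preceq S'$; matching these two descriptions of the same face yields $S\preceq S'$, provided one first checks, without circularity, that distinct regular subdivisions determine distinct faces.
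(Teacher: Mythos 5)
The paper does not actually prove this statement---it is quoted from Gelfand--Kapranov--Zelevinsky (p.~228, Theorem 2.4), with the supporting machinery also in \cite{Triangulation}---so there is no internal proof to compare against. Your outline is the standard route to it: identify $F(\S(A,\omega))$ with the face of $\sec(A)$ on which the linear functional $\omega$ attains its minimum, using $\la\omega,\phi_A(T)\ra=(n+1)!\int_Q g_{\omega,T}\,dv$ and the comparison of the characteristic section with the lower envelope $g_\omega$. The forward implication ($S\preceq S'$ gives $F(S)\subseteq F(S')$) is indeed immediate from transitivity of refinement, and your reduction of the converse to the assertion that every triangulation refining $S$ also refines $S'$ is correct.

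That said, the proposal is not a complete proof, and two genuine gaps remain. First, the equality case of your key inequality is stated too loosely: $\int_Q g_{\omega,T}=\int_Q g_\omega$ forces not only that each maximal simplex of $T$ lie in a single cell of $\S(A,\omega)$, but also that every point ${\bf a}_j$ used as a vertex of $T$ satisfy $g_\omega({\bf a}_j)=\omega_j$, i.e.\ that $T$ use no point lifted strictly above the lower envelope. With the purely geometric notion of refinement adopted in the paper this extra condition is not automatic, so a triangulation could refine $\S(A,\omega)$ geometrically while $\phi_A(T)$ fails to lie on the $\omega$-minimal face; one must work with labelled subdivisions (as GKZ do) for your identification of $F(\S(A,\omega))$ with the minimising face to be an equality rather than a one-sided containment. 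Second, and more seriously, the passage from ``every triangulation refining $S$ refines $S'$'' to ``$S\preceq S'$'' is precisely the nontrivial half of the theorem, and neither of your two routes closes it: route (a) requires actually constructing, for a cell $C$ of $S$ cut by a wall of $S'$, a regular triangulation refining $S$ with a simplex straddling that wall (this needs, e.g., pulling triangulations together with a compatibility argument across the cells of $S$), while route (b) presupposes that $S\mapsto F(S)$ is injective on regular subdivisions, which is essentially equivalent to what is being proved. Until one of these steps is carried out, the converse direction remains open in your write-up.
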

From Theorem $\ref{th:gkz2}$, the facets of the secondary polytope
$\sec(A)$ correspond to regular subdivisions of $(Q,A)$ which only
refine the trivial subdivision and no other. We call these subdivisions
the \emph{coarse subdivisions}. Note that the trivial subdivision
always exists and is given by the zero height function $\omega =(0, \dots ,0)$.
The following Lemma gives the explicit defining equation of the facet
of $\sec(A)$ corresponding to a coarse subdivision.
\begin{lemma}[\cite{Triangulation} Excercise 5.11]\label{lem:gkz2}
Let $(Q,A)$ be as above. Let $\omega \in \R^{N+1}$ be a height function which produces the coarse subdivision
$\S(A,\omega)$ of $(Q,A)$. The defining linear equation of the facet of $\sec(A)$
corresponding to $\S(A,\omega)$ is 
\[
\sum_{j\in J}{\omega_j}\varphi _j =n! \sum_{C\in T}{\rm{Vol}} (C)\sum_{j\in C}
\omega_j \qquad \text{for}\quad \varphi=(\varphi_0,\dots,\varphi_N)\in \R^{N+1},
\]
where $T$ is a certain triangulation which is obtained by
refining $\S(A,\omega)$ (i.e., $T \preceq \S(A,\omega)$).
\end{lemma}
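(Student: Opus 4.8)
The plan is to exhibit the linear functional $\varphi \mapsto \la \omega, \varphi \ra$ as a supporting functional of $\sec(A)$ whose minimizing face is exactly the facet $F(\S(A,\omega))$, and then to compute its minimum value by means of Lemma \ref{lem:triangulation}. First I would pass from the GKZ-vectors to the geometry of the lift $A^{\omega}$ via Proposition \ref{prop:gkz1}. Let $h_{\omega}: Q \longrightarrow \R$ be the convex piecewise-linear function whose graph is the \emph{lower boundary} of $Q^{\omega}=\conv(A^{\omega})$; concretely $h_{\omega}({\bf x})=\min\set{t\in\R | ({\bf x},t)\in Q^{\omega}}$. By Definition \ref{def:RegTri} the maximal regions on which $h_{\omega}$ is affine are precisely the cells of the regular subdivision $\S(A,\omega)$.

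The key geometric step is a pointwise comparison between an arbitrary characteristic section and $h_{\omega}$. For any triangulation $T$ of $(Q,A)$ and any maximal simplex $C=\conv({\bf a}_{i_0},\dots,{\bf a}_{i_n})$ of $T$, the graph of $g_{\omega,T}$ over $C$ is the affine interpolant of the heights $\omega_{i_k}$, hence it coincides with the lifted simplex $\conv(\hat{\bf a}_{i_0},\dots,\hat{\bf a}_{i_n})\subset Q^{\omega}$. Since $h_{\omega}$ records the \emph{lowest} height attained by $Q^{\omega}$ over each base point, this forces
\[
g_{\omega,T}({\bf x})\geqslant h_{\omega}({\bf x}) \qquad \text{for all } {\bf x}\in Q,
\]
with equality everywhere if and only if every lifted simplex $\conv(\hat{\bf a}_{i_0},\dots,\hat{\bf a}_{i_n})$ is a lower face, that is, if and only if $T$ refines $\S(A,\omega)$. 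Integrating over $Q$ and invoking Proposition \ref{prop:gkz1} yields
\[
\la \omega, \phi_A(T)\ra=(n+1)!\int_Q g_{\omega,T}({\bf x})\,dv \geqslant (n+1)!\int_Q h_{\omega}({\bf x})\,dv=:c,
\]
with equality precisely when $T\preceq\S(A,\omega)$.

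Thus $\sec(A)$ lies in the half-space $\set{\varphi | \la\omega,\varphi\ra\geqslant c}$ and meets the hyperplane $\set{\varphi | \la\omega,\varphi\ra=c}$ exactly along $\conv\set{\phi_A(T) | T\preceq\S(A,\omega)}=F(\S(A,\omega))$. As $\S(A,\omega)$ is a coarse subdivision, Theorem \ref{th:gkz2} guarantees that this face is a \emph{facet}, so $\la\omega,\varphi\ra=c$ is its defining equation (understood inside the affine hull of the not-full-dimensional polytope $\sec(A)$). It remains to evaluate $c$: choosing any single triangulation $T\preceq\S(A,\omega)$, for which $g_{\omega,T}=h_{\omega}$, equation \eqref{eq:GKZVol} of Lemma \ref{lem:triangulation} gives $c=\la\omega,\phi_A(T)\ra=n!\sum_{C\in T}{\rm{Vol}}(C)\sum_{j\in C}\omega_j$, which is exactly the right-hand side in the statement.

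I expect the main obstacle to be the convexity step: carefully verifying that $h_{\omega}$ is the pointwise minimum of all characteristic sections $g_{\omega,T}$, and that the equality case characterizes exactly the refinements of $\S(A,\omega)$, in a manner consistent with the ``lower face'' convention (downward displacement leaving $Q^{\omega}$). Once this comparison and its equality case are secured, the remainder is bookkeeping that merely assembles Proposition \ref{prop:gkz1}, Lemma \ref{lem:triangulation} and Theorem \ref{th:gkz2}.
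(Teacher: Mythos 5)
Your argument is correct, and it is worth noting that the paper itself offers no proof of this lemma at all: it is quoted verbatim with a citation to \cite{Triangulation}, Exercise 5.11. What you have written is the standard lower-envelope argument (essentially the one in \cite{GKZ}, Chapter 7 and in \cite{Triangulation}, Chapter 5): the graph of $g_{\omega,T}$ over each maximal simplex is a lifted simplex contained in $Q^{\omega}$, hence $g_{\omega,T}\geqslant h_{\omega}$ pointwise; integrating and applying Proposition \ref{prop:gkz1} shows that $\la\omega,\cdot\ra$ is bounded below on $\sec(A)$ by $c=(n+1)!\int_Q h_{\omega}$ with equality exactly on $F(\S(A,\omega))$, which is a facet by Theorem \ref{th:gkz2} since $\S(A,\omega)$ is coarse; and Lemma \ref{lem:triangulation} evaluates $c$ as the stated right-hand side. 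The only point deserving more care is the one you flag yourself, the equality case: ``$g_{\omega,T}=h_{\omega}$ everywhere'' is equivalent to ``$T$ refines $\S(A,\omega)$'' only if refinement is taken in the marked sense of \cite{GKZ} (a triangulation whose cells are geometrically contained in those of $\S(A,\omega)$ but which uses a point ${\bf a}_j$ whose lift $\hat{\bf a}_j$ lies strictly above the lower boundary satisfies $g_{\omega,T}({\bf a}_j)=\omega_j>h_{\omega}({\bf a}_j)$ and so fails the equality). This imprecision is inherited from the paper's purely geometric Definition of refinement rather than introduced by you, and your criterion ``equality iff every lifted simplex lies in the lower boundary'' is the correct one; with that understanding the proof is complete.
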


\section{Proof of the main theorem}\label{sec:main}
Now we ready to prove Theorem $\ref{th:main}$.
\begin{proof}
Let $A=\set{{\bf{a}}_0,\dots,{\bf{a}}_N}\subset \mathbb{Z}^n$ be a finite subset which satisfies $(*)$ and let $J=\set{0,\dots, N}$ be the index set of labels. Let
$X_A\longrightarrow \P^N$ be the associated projective toric variety of degree $d=\deg X_A \geqslant 2$. 
We denote the Chow point of $X_A$ by $R_{X_A}$. Considering the complex torus
$(\C^{\times})^{N+1}$, we define the subtorus
of $(\C^{\times})^{N+1}$ by
\[
H=\set{(t_0,\dots,t_N)\in (\C^{\times})^{N+1}|\prod^N_{j=0}t_j=1}\cong (\C^{\times})^N.
\]
Suppose that $R_{X_A}$ is $H$-semistable but not $H$-polystable.
Setting $G=(\C^{\times})^{N+1}$, we consider the projection
\begin{align}\label{map:projection}
\begin{split}
\pi_H: \chi(G)\otimes \R \cong \R^{N+1} &\longrightarrow  \chi(H)\otimes \R \cong \R^N,\\
(\varphi_0, \dots, \varphi_N) &\longmapsto (\varphi_0-\varphi_N, \dots, \varphi_{N-1}-\varphi_N).
\end{split}
\end{align}
Then by Theorem $\ref{th:ksz}$ we observe that
\begin{equation*}\label{projection}
\pi_H(\sec(A))=\mathcal{N}_H(R_{X_A}) \qquad \text{and}\qquad
\pi_H^{-1}(\del \mathcal{N}_H(R_{X_A})) \subset \del \sec(A),
\end{equation*}
where $\del P$ denotes the boundary of an integral polytope $P$. 
Thus, the numerical criterion (Proposition
$\ref{prop:numerical criterion2}$) implies that there is an element
$\varphi=(\varphi_0,\dots,\varphi_N)$ in $\del \sec(A)$ satisfying
$\pi_H(\varphi)={\bf{0}}\in \partial \mathcal{N}_H(R_{X_A})$. In particular, there exists $t\in \R$ such that
\begin{equation}\label{eq:BoundSec}
(\underbrace{t,\cdots ,t}_{N+1})\in \del \sec(A)
\end{equation}
from \eqref{map:projection}. Meanwhile, we have the equality
\begin{equation}\label{eq:Ono}
(N+1)t=(n+1)!\mathrm{Vol}(Q)
\end{equation}
by ($18$) in \cite{Ono11}. This implies that $t\neq 0$ as $\mathrm{Vol}(Q)>0$ in \eqref{eq:Ono}. Hence we may assume that
there exists $t\in \R^{\times}$ satisfying \eqref{eq:BoundSec}. 

Now we take the facet $F$ of
$\sec(A)$ which contains the point $(t,\cdots,t)$ in \eqref{eq:BoundSec}. As discussed in Section
$\ref{sec:GKZ}$, there is a height function $\omega\in \R^{N+1}$ which
produces the coarse subdivision $\S(A,\omega)$ corresponding to
this facet $F$. Fix $\omega \in \R^{N+1}$. Then Lemma $\ref{lem:gkz2}$ implies that 
\[
t\sum_{j\in J}\omega_j =n! \sum_{C\in \T}{\rm{Vol}}(C)\sum_{j\in C}\omega_j
\]
for a certain triangulation $\T\preceq \S(A,\omega)$ which is given by a refinement of $\S(A,\omega)$. Also, Lemma
 $\ref{lem:triangulation}$ gives
 \[
\la \omega, \phi_A(T)\ra =n!\sum_{C\in T}{\rm{Vol}}(C)\sum_{j\in C}\omega_j
 \]
for any triangulation $T$ of $(Q,A)$. Taking $T=\T\preceq \S(A,\omega)$, we have
\begin{equation}\label{eq:relation1}
t\sum_{j\in J}\omega_j =\la \omega, \phi_A(T)\ra.
\end{equation}

On the other hand, we may assume that there is a subset $I \subset J$ such that
$$
\begin{cases}
\omega_i=1 & \text{for}\; \; i\in I, \\
\omega_j=0 & \text{for}\; \; j\in J\setminus I,
\end{cases}
$$
from the definition of the coarse subdivisions.

{\bf{Case I. The simplest case:}}
 Assume that there is only one $i\in I$ satisfying $\omega_{i}=1$ and there are no other (i.e., $I=\set{i}$). 
Then we have the following two possibilities:
(a) ${\bf{a}}_{i} \notin \mathcal{V}(Q)$ and (b) ${\bf{a}}_{i} \in \mathcal{V}(Q)$.

In the case of (a), we observe that
\[
\langle \omega, \phi_A(T) \rangle=0
\]
for any $T\preceq \S(A,\omega)$. Remark that ${\bf{a}}_{i}$ is never a vertex of any simplices of $T$ because ${\bf{a}}_{i}$ is lifting by the height function
$\omega$. Therefore \eqref{eq:relation1} implies $t=0$. This contradicts $t\in \R^{\times}$.

In case (b), ${\bf{a}}_{i}$ must be contained in a standard simplex $C$ of $T$ with ${\bf{a}}_{i}\in \mathcal{V}(C)$, where $\mathrm{Vol}(C)=1/n!$. Thus,
\[
\langle \omega, \phi_A(T) \rangle =\omega_i\cdot \phi_{A,T}({\bf{a}}_i)=1.
\]
Then \eqref{eq:relation1} implies $t=1$. 
Substituting this in \eqref{eq:Ono}, we have
\begin{equation}\label{eq:relation2}
N+1=(n+1)!\mathrm{Vol}(Q).
\end{equation} 
Therefore, Lemma $\ref{lem:polytope}$ (see the Appendix) implies that 
$Q$ is a standard $n$-dimensional simplex $\Delta_n=\linebreak{\rm{Conv}}\set{{\mathbf{e}}_i| 1\leqslant i \leqslant n}$.
Then the associated toric variety is
$(\P^n,\mathcal{O}_{\P^n}(1))$ which has degree $1$. This contradicts $\deg X_A\geqslant 2$.

Hence the assertion is verified in the simplest case.

 {\bf{Case II. The general case:}}
Now let us consider the general case. For the simplicity, we may assume that
\[
\omega=(0,\dots,0,\omega_i,0,\dots,0,\omega_{i'},0,\dots,0)
\]
for $\omega_i=\omega_{i'}=1$. Other cases are similar and our proof is readily generalized to such cases with minor modifications.
Then we have the following three possibilities:

(a) In the case where both ${\bf{a}}_i$ and ${\bf{a}}_{i'}$ are not a vertex of $Q$, we conclude that
\[
\langle \omega, \phi_A(T) \rangle =0
\]
by the computation in Case I-(a). Again, this yields $t=0$, a contradiction.

(b) In the case where ${\bf{a}}_i\in \mathcal{V}(Q)$ but ${\bf{a}}_{i'}\notin \mathcal{V}(Q)$, we have
\begin{align*}
\langle \omega, \phi_A(T) \rangle &=\omega_i\cdot \phi_{A,T}({\bf{a}}_i)+\omega_{i'}\cdot \phi_{A,T}({\bf{a}}_{i'}) \\
&=1,
\end{align*}
by the same argument in Case I-(a),(b). Therefore $t=\frac{1}{2}$ by \eqref{eq:relation1}. Substituting this in \eqref{eq:Ono}, we have
\[
(N+1)=2(n+1)!\mathrm{Vol}(Q).
\]
This contradicts Lemma $\ref{lem:polytope}$.

(c) In the case where both ${\bf{a}}_i$ and ${\bf{a}}_{i'}$ are vertices of $Q$, we have 
\[
\langle \omega, \phi_A(T) \rangle =2.
\]
Thus \eqref{eq:relation1} implies $t=1$. Now we repeat the argument in the last part of Case I-(b). 

The proof is complete.
\end{proof}
\begin{remark}\label{rem:cubic}
It is an interesting problem to generalize Theorem $\ref{th:main}$ to the case of \textit{non-toric}. However, the following example indicates that
there seems to be no such a generalization even to the case of \emph{Fano} varieties
 (see \cite{Mukai}, $7.2$ (b) for more details). 

Let $X$ be a cubic surface in $\P^3$ and let $R_X$ denote the Chow point of $X$. Remark that $R_X$ is given by the defining equation of $X$ since $X$ is
a hypersurface. We recall the following results on Mumford's Geometric Invariant Theory:
\begin{itemize}
\item $X$ is Chow stable if and only if it has finitely many singular points of type $A_1$ and no worse singularities;
\item $X$ is Chow semistable if and only if it has at most finitely many singular points of type $A_1$ or type $A_2$.
\end{itemize} 
Let $X_0\subset \P^3$ be a special one which is given by
\[
X_0:=\set{[x:y:z:w]\in \P^3|y^3-xzw=0}\subset \P^3.
\]
Then $X_0$ has exactly three singular points
\[
p_1=[1:0:0:0],\quad p_2=[0:0:1:0],\quad p_3=[0:0:0:1]
\]
which are all of type $A_2$. Thus, $X_0$ is Chow semistable and not Chow stable. 
Moreover, it is well-known that $SL(4,\C)\cdot R_{X_0}$ is a closed orbit (\cite{Mukai}, Proposition $7.23$).
Hence we conclude that $X_0$ is Chow polystable. Let us consider any other cubic surface $X$ with a singular point
of type $A_2$ such that
\[
SL(4,\C)\cdot R_X \cap SL(4,\C)\cdot R_{X_0}=\emptyset.
\] 
Obviously, $X$ is Chow semistable. Then it follows that the closure of $SL(4,\C)\cdot R_X$ contains $R_{X_0}$. This implies that
$SL(4,\C)\cdot R_X$ is not closed orbit. Therefore, $X$ is Chow semistable but not Chow polystable.
\end{remark}

\noindent{{\bfseries Appendix.} }\vspace{0.4cm}

In this appendix we shall show Lemma $\ref{lem:polytope}$ which is used in the proof of our theorem. To begin, we recall some properties of
the \emph{Ehrhart {\rm{h}}-vector} of an integral polytope. See \cite{Mustata}, \cite{BH}, Chapter $6$, for more details.

Let $Q\subset \R^n$ be an $n$-dimensional integral polytope. Let $E_Q(t)$ denote the \textit{Ehrhart polynomial} of $Q$, which is a polynomial
of degree $n$ satisfying
\[
E_Q(\ell)=\mathrm{Card}(\ell Q \cap \mathbb{Z}^n)
\]
for each positive integer $\ell$. Then we define its \textit{Ehrhart series} by
\[
\mathrm{Ehr}_Q(t):=1+\sum_{\ell \geqslant 1} E_Q(\ell)t^{\ell}.
\]
It is well-known that $\mathrm{Ehr}_Q(t)$ can be written as the power series expansion at $t=0$ of a rational function
\[
\frac{h_nt^n+h_{n-1}t^{n-1}+\cdots +h_0}{(1-t)^{n+1}}
\]
with some integers $h_0,\dots, h_n$. We call $(h_0,\dots, h_n)$ the \textit{Ehrhart {\rm{h}}-vector} of $Q$.
Then the Ehrhart $h$-vector satisfies the following properties.
\begin{proposition}[Ehrhart-Stanley]\label{prop:EH}
Let $Q$ be an $n$-dimensional integral polytope in $\R^n$.
\begin{enumerate}
\item $h_0=1,\quad h_1=\mathrm{Card}(Q\cap \mathbb{Z}^n)-n-1$.
\item $\displaystyle n!\mathrm{Vol}(Q)=\sum_{\ell=0}^nh_\ell$.\\
\item $h_{\ell} \in \mathbb{Z}_{\geqslant 0} \quad \text{for} \quad 0 \leqslant \ell \leqslant n$.
\end{enumerate}
\end{proposition}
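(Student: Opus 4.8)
The plan is to extract all three assertions directly from the rational-function form of the Ehrhart series, treating (1) and (2) as elementary coefficient comparisons and isolating (3) as the single genuinely deep input. For (1), I would expand the denominator as a binomial series $(1-t)^{-(n+1)}=\sum_{k\geq 0}\binom{n+k}{n}t^k$ and multiply by the numerator $h_0+h_1t+\cdots+h_nt^n$. Matching the constant term of the resulting power series against the constant term $1$ of $\mathrm{Ehr}_Q(t)$ forces $h_0=1$; matching the coefficient of $t$, which equals $h_1+(n+1)h_0$ on the product side, against $E_Q(1)=\mathrm{Card}(Q\cap\mathbb{Z}^n)$ gives $h_1=\mathrm{Card}(Q\cap\mathbb{Z}^n)-n-1$.

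Next, for (2), I would use that $E_Q(\ell)$ is a polynomial of degree $n$ whose leading coefficient is the Euclidean volume $\mathrm{Vol}(Q)$. The leading asymptotics of $E_Q(\ell)$ as $\ell\to\infty$ are governed by the order-$(n+1)$ pole of $\mathrm{Ehr}_Q(t)$ at $t=1$: since $\binom{n+k}{n}\sim k^n/n!$, the numerator evaluated at $t=1$ yields $E_Q(\ell)\sim \tfrac{1}{n!}\big(\sum_{\ell=0}^n h_\ell\big)\ell^n$. Equating this leading coefficient with $\mathrm{Vol}(Q)$ produces the identity $n!\,\mathrm{Vol}(Q)=\sum_{\ell=0}^n h_\ell$, which is precisely (2).

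The hard part is (3), the non-negativity of each $h_\ell$; this is Stanley's theorem and is invisible from the generating function alone, since a rational function of the stated shape need not have a non-negative numerator in general. Here my plan is to pass to the graded semigroup (Ehrhart) ring attached to the cone over $Q\times\set{1}\subset\R^{n+1}$, whose Hilbert series is exactly $\mathrm{Ehr}_Q(t)$. The key structural fact is that this ring is Cohen-Macaulay; modding out a homogeneous system of parameters then leaves a finite-dimensional graded vector space whose Hilbert function is precisely $(h_0,\dots,h_n)$, so that each $h_\ell$ is the dimension of a vector space and hence non-negative. I expect establishing the Cohen-Macaulay property and identifying the $h_\ell$ as these graded dimensions to be the main obstacle, as it lies outside the purely convex-geometric framework of the rest of the paper; since only the statement is needed downstream in Lemma \ref{lem:polytope}, I would at this point simply invoke the Ehrhart-Stanley non-negativity theorem as cited in \cite{Mustata} and \cite{BH}.
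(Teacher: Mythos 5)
The paper offers no proof of this proposition at all: it is stated as a known result of Ehrhart and Stanley and supported only by the citations to Musta\c{t}\u{a} and Bruns--Herzog, so there is no in-paper argument to compare against. Your plan is nonetheless correct and is the standard route. The coefficient comparisons for (1) are right: multiplying the numerator by $(1-t)^{-(n+1)}=\sum_{k\geqslant 0}\binom{n+k}{n}t^{k}$ gives constant term $h_0$ and $t$-coefficient $(n+1)h_0+h_1$, which matched against $1$ and $E_Q(1)=\mathrm{Card}(Q\cap\mathbb{Z}^n)$ yield exactly the stated formulas. For (2) your asymptotic argument is fine, but note it silently uses the fact that the leading coefficient of the Ehrhart polynomial is the Euclidean volume of $Q$; that is itself a (standard, Riemann-sum) fact about lattice-point counts in dilates, and should be flagged as an input rather than treated as part of the definition. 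For (3) you correctly identify the only genuinely nontrivial content as Stanley's non-negativity theorem; the Cohen--Macaulay route you sketch (Hochster's theorem for the normal semigroup ring of the cone over $Q\times\{1\}$, then quotient by a degree-one homogeneous system of parameters, which exists over an infinite field because the height-one elements generate the cone, so that $(1-t)^{n+1}\mathrm{Ehr}_Q(t)$ becomes the Hilbert series of a finite-dimensional graded vector space) is precisely the argument in the cited sources. Ending by invoking the citation is exactly what the paper does, and is appropriate here since only the statement is used downstream in Lemma \ref{lem:polytope}.
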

\begin{lemma}\label{lem:polytope}
Let $Q$ be an $n$-dimensional integral polytope in $\R^n$. Then we have
\begin{equation*}\label{ineq:polytope}
\mathrm{Card}(Q\cap \mathbb{Z}^n) \leqslant (n+1)! \mathrm{Vol}(Q)
\end{equation*}
and equality holds if and only if $Q$ is the standard $n$-simplex $\Delta_n$.
\end{lemma}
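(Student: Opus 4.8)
The plan is to read both sides of the inequality off the Ehrhart $h$-vector $(h_0,\dots,h_n)$ of $Q$ and to reduce the estimate to the nonnegativity statement in Proposition \ref{prop:EH}(3). First I would rewrite the two quantities: by Proposition \ref{prop:EH}(1) we have $\mathrm{Card}(Q\cap\mathbb{Z}^n)=h_1+n+1$, while Proposition \ref{prop:EH}(2) gives $(n+1)!\,\mathrm{Vol}(Q)=(n+1)\sum_{\ell=0}^n h_\ell$. Substituting $h_0=1$ and cancelling, the desired inequality $\mathrm{Card}(Q\cap\mathbb{Z}^n)\leqslant (n+1)!\,\mathrm{Vol}(Q)$ becomes equivalent to
\[
0\leqslant n\,h_1+(n+1)\sum_{\ell=2}^n h_\ell,
\]
which holds at once since every $h_\ell$ is a nonnegative integer and $n\geqslant 1$. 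This disposes of the inequality with essentially no computation.

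The interesting half is the equality case. Because the coefficients $n$ and $n+1$ are strictly positive, equality in the displayed relation forces $h_1=h_2=\dots=h_n=0$, so the $h$-vector is $(1,0,\dots,0)$. I would then translate this back into geometry: $h=(1,0,\dots,0)$ is exactly the pair of conditions $\mathrm{Card}(Q\cap\mathbb{Z}^n)=n+1$ and $n!\,\mathrm{Vol}(Q)=1$. The first condition is the geometric heart of the argument. Since $Q$ is a full-dimensional ($n$-dimensional) integral polytope it has at least $n+1$ vertices, and every vertex of an integral polytope is a lattice point; hence $\mathrm{Card}(Q\cap\mathbb{Z}^n)=n+1$ forces $Q$ to have exactly $n+1$ vertices which are its only lattice points. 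Thus $Q$ is a lattice simplex with no lattice points other than its vertices.

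It remains to upgrade this to the claimed rigidity. The second condition $n!\,\mathrm{Vol}(Q)=1$ says that the simplex $Q$ has the minimal possible normalized volume, i.e.\ $Q$ is \emph{unimodular}: writing its vertices as ${\bf v}_0,\dots,{\bf v}_n$, the edge vectors ${\bf v}_1-{\bf v}_0,\dots,{\bf v}_n-{\bf v}_0$ form a $\mathbb{Z}$-basis of $\mathbb{Z}^n$. Consequently there is an affine unimodular transformation of the lattice carrying $Q$ onto the standard simplex $\Delta_n$, which is the content of the equality statement (understood up to lattice equivalence, as is needed in Case I-(b) of the main proof). Conversely $\Delta_n$ visibly realizes $\mathrm{Card}(Q\cap\mathbb{Z}^n)=n+1=(n+1)!\,\mathrm{Vol}(\Delta_n)$, so equality does hold for it.

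The main obstacle I anticipate lies entirely in the equality case: the reduction of the inequality is immediate from Proposition \ref{prop:EH}, but one must argue carefully that $h=(1,0,\dots,0)$ characterizes the unimodular simplex. The two steps that deserve attention are (i) using full-dimensionality to pin down the exact number of vertices from the lattice-point count, and (ii) deducing unimodularity, and hence equivalence to $\Delta_n$, from the normalized volume being $1$; it is in step (ii) that the determinant–volume dictionary for lattice simplices does the real work.
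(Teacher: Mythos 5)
Your proof is correct and follows essentially the same route as the paper: both sides are expressed via the Ehrhart $h$-vector using Proposition \ref{prop:EH}(1)--(2), and the inequality and its equality case reduce to the nonnegativity of the $h_\ell$. You supply more detail than the paper on the step from $(h_0,\dots,h_n)=(1,0,\dots,0)$ to $Q=\Delta_n$, and your remark that this identification only holds up to an affine unimodular transformation is a genuine (and appropriate) precision that the paper's one-line assertion glosses over.
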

\begin{proof}
Let $(h_0,\dots,h_n)$ be the Ehrhart $h$-vector of $Q$.
Combining $(1)$ and $(2)$ in Proposition $\ref{prop:EH}$, we have
\begin{align*}
(n+1)!\mathrm{Vol}(Q)&=(n+1)\cdot n!\mathrm{Vol}(Q) \\
&=(n+1)(\sum_{\ell=0}^nh_{\ell}) \\
&=(n+1)(1+h_1+h_2+\dots +h_n).
\end{align*}
On the other hand, $\mathrm{Card}(Q\cap \mathbb{Z}^n)=h_1+n+1$ by $(1)$ in Proposition $\ref{prop:EH}$.
Since all integers $h_{\ell} \; (\ell=1,\cdots,n)$ are nonnegative by Proposition $\ref{prop:EH}$, $(3)$, we conclude that
\[
\mathrm{Card}(Q\cap \mathbb{Z}^n)=h_1+n+1\leqslant (n+1)(1+h_1+h_2+\dots+h_n)=(n+1)!\mathrm{Vol}(Q).
\]
In particular, we see that $\mathrm{Card}(Q\cap \mathbb{Z}^n) = (n+1)! \mathrm{Vol}(Q)$ if and only if
\[
(h_0,\dots,h_n)=(1,0,\dots,0),                             
\]
and in this case we have that $(h_0,\dots,h_n)$ equals $(1,0,\dots,0)$ if and only if $Q=\Delta_n$.
The lemma is proved.
\end{proof}

\end{document}